\documentclass[11pt]{article}
	
	\usepackage{algorithm}
	\usepackage{algpseudocode}
    \usepackage{url}
    \usepackage{verbatim}
    \usepackage[titletoc]{appendix}
    \usepackage{graphicx}
    \textwidth=6.5in
    \textheight=9.00in
    \footskip=0.5in
    \oddsidemargin=0in
    \topmargin=-0.5in

	\usepackage{amsmath}
	\usepackage{amsthm}
	\usepackage{amsfonts}
	\usepackage{graphicx}
    \usepackage{nicefrac}
    \usepackage{longtable}
    \usepackage{color}
    \usepackage{graphicx, amssymb,graphics}
 	\usepackage{subfigure}
 	\usepackage{epstopdf}

\newtheorem{rem}{Remark}[section]
\newtheorem{theorem}{Theorem}[section]

\newtheorem{lem}{Lemma}[section]

\newcommand{\n}{\mbox{\boldmath$n$}}

\newcommand{\diff}{\mathrm{d}}
\newcommand{\dS}{\diff S}

\newcommand{\dx}{\mbox{d}\textbf{x}}
\newcommand\dt {{\Delta t}}

\newcommand{\mF}{\mathcal F}
\newcommand{\mJ}{\mathcal J}
\newcommand{\Cperx}{{\mathcal C}_{\rm per}^x}
\newcommand{\Cperxo}{\mathring{{\mathcal C}}_{\rm per}^x}


	\newcommand\be {\begin{equation}}
	\newcommand\ee {\end{equation}}

	\title{Convergence analysis of positivity-preserving finite difference scheme for the Flory-Huggins-Cahn-Hilliard equation with dynamical boundary condition}

	\date{\today}

\begin{document}
	
\author{
Yunzhuo Guo \thanks{Department of Applied Mathematics, Hong Kong Polytechnic University, Hung Hom, Hong Kong (yunzguo@polyu.edu.hk)}
\and Cheng Wang\thanks{Department of Mathematics, The University of Massachusetts, North Dartmouth, MA  02747, USA (cwang1@umassd.edu)}
\and	
Zhengru Zhang\thanks{Corresponding author. Laboratory of Mathematics and Complex Systems, Beijing Normal University, Beijing 100875, P.R. China (zrzhang@bnu.edu.cn)}
}

 	\maketitle
	\numberwithin{equation}{section}
	
\begin{abstract}	 
The Cahn-Hilliard equation has a wide range of applications in many areas of physics and chemistry. To describe the short-range interaction between the solution and the boundary, scientists have constructed dynamical boundary conditions by introducing boundary energy. In this work, 
the dynamical boundary condition is located on two opposite edges of a square domain and is connected with bulk by a normal derivative. A convex-splitting numerical approach is proposed to enforce the positivity-preservation and energy dissipation, combined with the finite difference spatial approximation. The $\ell^\infty (0, T; H_h^{-1}) \cap \ell^2 (0, T; H_h^1)$ convergence analysis and error estimate is theoretically established, with the first order accuracy in time and second order accuracy in space. The bulk and surface discrete mass conservation of the exact solution is required to reach the mean-zero property of the error function, so that the associated discrete $H_h^{-1}$ norm is well-defined. The mass conservation on the physical boundary is maintained by the classic Fourier projection. In terms of the mass conservation in bulk, we introduce a trigonometric auxiliary function based on the truncation error expansion, so that the bulk mass conservation is achieved, and it has no effect on the boundary. The smoothness of trigonometric function makes the Taylor expansion valid and maintains the convergence order of truncation error as well. As a result, the convergence analysis could be derived with a careful nonlinear error estimate. 

\bigskip

\noindent
{\bf Key words and phrases}: 
Cahn-Hilliard equation, Flory-Huggins energy potential, dynamical boundary condition,  convergence analysis

\noindent
{\bf AMS subject classification}: \, 35K35, 35K55, 49J40, 65M06, 65M12	
\end{abstract}
	


\section{Introduction} 
The Cahn-Hilliard (CH) equation plays an important role in material science and biological applications. It was first constructed by Cahn and Hilliard \cite{cahn58} to describe the free energy of isotropic systems with non-uniform density, and is widely used in various problems. Although traditional boundary conditions, such as periodic and Neumann type ones, bring convenience to numerical analysis and practical computation, they may not be applicable to some particular problems like the moving contact line model and the interaction near the solid wall. A simple example is that Neumann boundary condition makes the interface always perpendicular to the solid wall, which is unphysical in most systems. To resolve this issue, scientists have proposed various dynamical boundary conditions. The one considered in this work is referred as the Liu-Wu model \cite{LiuC2019}, based on energy variation. In this model, a surface energy functional is introduced to describe the short-range interaction between the solution and the solid boundary. A theoretical analysis of both the weak and strong solutions, including the existence, uniqueness and corresponding regularity, was discussed in \cite{LiuC2019}. The bulk and surface energies are given by
\begin{align} 
& E_{bulk} (\phi) = \int_\Omega \frac{\varepsilon^2}{2} | \nabla \phi |^2 + F (\phi) \ \dx , \quad E_{surf} (\phi) = \int_\Gamma \frac{\varepsilon\kappa}{2} | \nabla_\Gamma \psi |^2 + G (\psi)  \,  \dS, \label{energy-CH-1} \\
& E_{total} = E_{bulk} + E_{surf} 
\end{align} 
where $\varepsilon$ corresponds to interface thickness, $\kappa$ stands for the surface diffusion, $F$ and $G$ refer to the nonlinear double-well potential. When $\kappa=0$, the equation is reduced to the  moving contact line problem \cite{PRL-63-766}. The governing equation becomes 
\begin{align} 
  & 
  \phi_t = \Delta \mu ,  \quad 
  \mu = F' (\phi) - \varepsilon^2 \Delta \phi,  \quad\quad\quad\quad\quad\quad\quad\quad  \mbox{in $\Omega$},  \label{equation-CHDBC-1} \\
 & 
 \partial_{\n} \mu = 0 ,  \quad \phi|_\Gamma = \psi ,  \quad 
 \quad\quad\quad\quad\quad\quad\quad\quad\quad\quad\quad\  \mbox{on $\partial \Omega$},  \label{equation-CHDBC-2}   \\
& 
  \psi_t = \Delta_\Gamma \mu_\Gamma ,  \quad 
  \mu_\Gamma = - \varepsilon\kappa \Delta_\Gamma \psi +  G' (\psi) 
  + \varepsilon^2 \partial_{\n} \phi \quad \mbox{on $\partial \Omega$} \label{equation-CHDBC-3}    
\end{align}  
in which $\mu_\Gamma$ stands for the surface chemical potential (instead of the boundary projection of $\mu$), and $\Delta_\Gamma$ is the Laplace-Beltrami operator on the Lipschitz continuous boundary $\Gamma:=\partial \Omega$. Meanwhile, the normal derivative term couples the interior and surface. An energy dissipation law becomes available through integration by parts:
\begin{align}
	\frac{d}{dt} E_{total}(\phi) & = \int_\Omega\left\{-\varepsilon^2 \Delta \phi  + F'(\phi)  \right\} \partial_t\phi  \,\dx + \int_\Gamma \varepsilon^2\partial_n\phi \partial_t\phi \, \dS \nonumber
	\\
	& \quad + \int_\Gamma \left\{-\kappa\varepsilon \Delta_\Gamma\psi   + G'(\psi)  \right\} \partial_t\psi \,\dS \nonumber
	\\
	& = \int_\Omega\left\{-\varepsilon^2 \Delta \phi  + F'(\phi)  \right\} \partial_t\phi  \,\dx  \label{energy}
	\\
	& \quad + \int_\Gamma \left\{-\kappa\varepsilon \Delta_\Gamma\psi   + G'(\psi) +\varepsilon^2\partial_n\phi \right\} \partial_t\psi \,\dS \nonumber
	\\
	& = -\int_\Omega\nabla\mu\cdot \nabla\mu  \,\dx - \int_\Gamma \nabla_\Gamma \mu_\Gamma \cdot \nabla_\Gamma \mu_\Gamma  \,\dS. \nonumber
\end{align}
Therefore, one obtains 
\begin{equation}
	\frac{d}{dt} E_{total} = - \| \nabla \mu \|^2 - \| \nabla_\Gamma \mu_\Gamma \|_\Gamma^2 \le 0 .
	\label{energy dissipation-3} 
\end{equation}  

If the Flory-Huggins logarithmic potential is chosen, $F$ and $G$ are given by 
\begin{align} 
  & F (\phi) = (1+ \phi) \ln (1+\phi) + (1-\phi) \ln (1-\phi) - \frac{\theta_0}{2} \phi^2 , \label{log energy 1}\\
  & G (\psi) = (1+ \psi) \ln (1+\psi) + (1-\psi) \ln (1-\psi) - \frac{\theta_0}{2} \psi^2 . \label{log energy 2}
\end{align} 
The scientific difficulty in the choice of the logarithmic function is associated with the positivity-preserving issue, which comes from the singularity. As an approximation, the following non-singular polynomial free energy is also wildly studied in various phase field problems: 
\begin{equation}
    F(\phi)=\frac{1}{4}(\phi^2-1)^2. \label{polynomial energy 1}
\end{equation}

In the Liu-Wu model, the dynamical boundary condition is given by a lower dimensional CH-type equation coupled with interior equation through normal derivative $\partial_{\n} \phi$. One major feature is that no material exchange is allowed between the bulk and the boundary, therefore each of both satisfies a mass conservation. In other type of dynamical boundary conditions, the situation may be different. Gal et al. derived a set of dynamical boundary conditions for the CH equation in \cite{Gal06} (Gal model). The dynamical equation of the mass fraction on the surface follows Allen-Cahn equation with an additional flux, so that neither total mass nor the boundary mass is conservative. Goldstein et al. \cite{Goldstein11} modified the boundary transport between bulk and surface and proposed the GMS model, which allows for equal mass exchange between surface and bulk and ensures the total mass conservation only. 

Three basic dynamic boundary conditions are widely used in various models, based on the conservation properties. As a foundation, scientists have generalized the basic models and attempt to establish a unified form of boundary conditions. In 2019, Knopf and Lam presented another set of dynamical boundary conditions for the CH equation by extending the Liu-Wu model at the boundary (KL model) \cite{Knopf_2020}. Furthermore, a fairly general formulation named KLLM model was raised in \cite{Knopf2021} that includes the Liu-Wu model and GMS model in the sense of limit. Jing and Wang raised a thermodynamically consistent dynamical boundary conditions following the generalized Onsager principle \cite{Jing2023}. This model not only covers multiple basic boundary conditions, but also can be extended to non-local problems. 

In a very recent work \cite{CH-DBC-2024}, the boundary condition was reduced to half-dynamical and half-periodic to adapt a square domain, i.e. the dynamical boundary condition in the $y$-direction and periodic boundary condition in the $x$-direction or vice versa. With a period length $\tau > 0$ and physical boundary $y=0,1$, the PDE system becomes 
\begin{align}
	& \partial_t \phi=\Delta \mu, \quad \mu= F^{\prime}(\phi)-\varepsilon^2 \Delta \phi, \label{DBC-Periodical-1}   \\
	& \phi(x+a \tau, y)=\phi(x, y), \quad \mu(x+a \tau, y)=\mu(x, y), \quad a \in \mathbb{Z}, 
	\label{DBC-Periodical-2}
	\\
	& \partial_{\boldsymbol{n}} \mu|_{y=0,1}=0,\quad \phi|_{y=0}=\psi^B,\quad \phi|_{y=1}=\psi^T,
	\label{DBC-Periodical-3} 
	\\
& \psi_t^B=D_x^2 \mu_B, \quad \mu_B = G^{\prime}\left(\psi^B\right)-\kappa\varepsilon D_x^2 \psi^B+\left.\varepsilon^2 \partial_{\boldsymbol{n}} \phi\right|_{y=0},
	\label{DBC-Periodical-4} 
	\\
& \psi_t^T=D_x^2 \mu_T, \quad \mu_T= G^{\prime}\left(\psi^T\right)-\kappa\varepsilon D_x^2 \psi^T+\left.\varepsilon^2 \partial_{\boldsymbol{n}} \phi\right|_{y=1},
	\label{DBC-Periodical-5}
	\\
& \psi^T(x+a\tau) = \psi^T(x), \quad \mu^T(x+a\tau) = \mu^T(x), \quad a\in\mathbb{Z}, 
	\label{DBC-Periodical-6}
	\\
& \psi^B(x+a\tau) = \psi^B(x), \quad \mu^B(x+a\tau) = \mu^B(x), \quad a\in\mathbb{Z}.
	\label{DBC-Periodical-7}
\end{align}
The surface energy reduces to two parts:
\begin{align}
	E_{\rm surf} =& \int_{\{y=0\}} \left(  G(\psi^B) + \frac{\kappa\varepsilon}{2} | \nabla_\Gamma \psi^B |^2 \right) \dS + \int_{\{y=1\}} \left( G(\psi^T) + \frac{\kappa\varepsilon}{2} | \nabla_\Gamma \psi^T |^2 \right) \dS.  \nonumber
\end{align}
At the physics level, the simplified system contains two parallel solid walls, with the solution confined between them. As demonstrated in \cite{Cherfils_10}, the modification can be mathematically described by a quotient domain that 
$$
\Omega=\Pi_{i=1}^{d-1}\left(\mathbb{R} /\left(L_i \mathbb{Z}\right)\right) \times\left(0, L_d\right), \quad L_i>0, i=1, \ldots, d, \quad d=2 \text { or } 3, 
$$
with a boundary
$$
\Gamma=\partial \Omega=\Pi_{i=1}^{d-1}\left(\mathbb{R} /\left(L_i \mathbb{Z}\right)\right) \times\left\{0, L_d\right\}.
$$
In \cite{CH-DBC-2024}, a convex-splitting finite difference scheme was implemented for \eqref{DBC-Periodical-1}-\eqref{DBC-Periodical-7}. The unique solvability, positivity-preservation and energy dissipation were proved, while the convergence analysis has not been reported. In this work, the authors aim to provide a detailed convergence analysis of the proposed numerical scheme. 

Plenty of numerical studies have been available for the dynamical boundary conditions \cite{Cherfils_10, Israel2015, Kenzler2001139, Liu2024, Metzger2021,Nabet2014}, and the convergence analysis of semi-discrete temporal discretization has been reported as well \cite{Bao2021a,Bao2021b,Meng23}. Meanwhile, an optimal rate convergence analysis for the fully discrete scheme remains an open problem, to the best of our knowledge. In fact, one essential difficulty is the discrete mass conservation of the exact solution. Such a discrete mass conservation is necessary in the theoretical analysis of convergence estimate, since a discrete $H^{-1}$ error estimate requires a mean-zero property of the numerical error function. This property is automatically satisfied for the continuous PDE solution and semi-discrete numerical solution, while it is invalid for the fully discrete numerical method. The dismatch comes from the spatial truncation error. If the classic periodic boundary condition or homogeneous Neumann boundary condition is taken, the Fourier projection (including cosine and sine projections) turns out to be a useful choice, since both the continuous and discrete integrals of a trigonometric function vanish over a single period. If the solution has sufficient regularity, a spectral approximation accuracy becomes valid between the projection and the exact solution. In turn, such a projection process would not affect the consistency analysis. This technique has been widely used in the finite difference analysis, such as the phase field crystal model \cite{baskaran13b, dong18a}, the CH equation and its variants~\cite{chen22a, chen19b, Dong2021a, Dong2022a, dong19b, GuoY24} and the classic Poisson-Nernst-Planck equation~\cite{LiuC2021a}, etc. If the finite element spatial discretization is taken, a similar projection technique has also been applied in the convergence analysis, with a proper choice of function space~\cite{diegel17, diegel16}. Moreover, for the gradient flow equation with a non-singular free energy, the Fourier projection could be replaced by a careful calculation of the discrete mass and the chemical potential \cite{chen16, guan17a, guo16, guo2021}. 

On the other hand, if the dynamical boundary condition is taken, the Fourier projection approach does not work out any more. To overcome this difficulty, we carry out the theoretical analysis in two steps. First, the Fourier projection is applied only in the periodic $x$-direction. Second, a trigonometric auxiliary function is explicitly defined to deal with the $y$-direction. The first step aims to preserve the discrete mass conservation on the physical boundary. The latter is used to preserve the discrete bulk mass conservation without affecting the surface part. The introduced auxiliary function should have sufficient regularity, so that the spatial discretization and Taylor expansion could be carried out in a straightforward way. This is the reason why the construction of the approximate solution in the consistency analysis is based on the smooth trigonometric functions. In addition, it is also proved that the auxiliary function is an infinitesimal perturbation independent of the time discretization and with the same order as the spatial truncation error, so that this technique could be expanded to higher-order convergence and other kinds of boundary conditions in a square domain. For the logarithmic singular term, its convexity leads to a non-negative inner product in the associated error estimate. In turn, the singularity could not cause any additional difficulty in the $H^{-1}$-convergence analysis. In fact, this theoretical framework is also valid for an error estimate in a higher order norm, such as the $H^1$ one. In that case, thanks to the smoothness of the auxiliary function, the higher-order convergence analysis could be accomplished with the help of the rough and refined error estimates. 

The rest of this paper is organized as follows. In Section \ref{sec:numerical scheme}, the finite difference spatial discretization is briefly reviewed. The fully discrete convex-splitting numerical scheme and the associating physical structure-preserving properties will also be stated without proof. Subsequently, the detailed $\ell^\infty (0, T; H_h^{-1}) \cap \ell^2 (0, T; H_h^1)$ convergence analysis will be provided in Section \ref{sec:convergence analysis}. In addition, the second order BDF2 numerical scheme is outlined and analyzed in Section \ref{sec: 2nd numerical scheme}. Some concluding remarks are made in Section \ref{sec:conclusion}.

\section{The first order numerical scheme} \label{sec:numerical scheme}	
A first order accurate, fully discrete numerical scheme will be reviewed in this section. The associated structure-preserving properties will be stated as well. The detailed proof can be found in \cite{CH-DBC-2024}. The centered finite difference spatial discretization is used. 

\subsection{A brief description of the spatial discretization}
The computational domain is taken as $\Omega=(0,1)^2$, and an extension to a three-dimensional domain will be straightforward. The physical boundary condition is set at the top and bottom boundary sections of $\Omega$, i.e. $y=0,1$. The case of physical boundary conditions on all four boundary sections could be analyzed in a similar manner, with a proper approximation at four vertices. A uniform spatial mesh size $\Delta x = \Delta y = h= \frac{1}{N}$ with $N \in \mathbb{N}$ is assumed, for simplicity of presentation. In particular, $f_{i,j}$ stands for the numerical value of $f$ at the mesh points $((i+\frac{1}{2})h, jh)$. In more detail, a cell-centered mesh in the $x$-direction is taken, combined with the regular mesh points in the $y$-direction. We denote a space 
$$\Cperx:=\{ (f_{i,j}) | f_{i+\alpha N,j}=f_{i,j}, \ i=0,\cdots,N-1,\ j=0,\cdots,N,\ \ \forall \alpha \in \mathbb{Z} \} , $$ 
with the discrete periodic boundary condition imposed in the $x$-direction. In addition, for any two grid functions $f,g \in \Cperx$, the discrete $\ell^2$ inner product and the associated $\ell^2$ norm are defined as 
\begin{equation}                          
    (f,g):=h^2 \sum_{i=0}^{N-1} \sum_{j=0}^{N} \omega_j f_{i,j}g_{i,j},\quad w_j = \left\{ 
 	\begin{array}{l} 
        1 , \, \, \, 1 \le j \le N-1 , \\ 
       \frac{1}{2} , \, \, \, j=0, N , 
	\end{array} \right. \quad \|f\|_2^2 := (f,f).
\end{equation}
The mean-zero function space is introduced, based on this inner product: 
$$
\Cperxo:=\{ f \in \Cperx | (f,1)=0 \}.
$$
Moreover, the normal derivative is defined as
\begin{align}
    \tilde{D}_y f_{i,0} = \frac{f_{i,1}-f_{i,-1}}{2h}, \quad \tilde{D}_y f_{i,N} = \frac{f_{i,N+1}-f_{i,N-1}}{2h}, \quad \forall f \in \Cperx , 
\end{align}
and the homogeneous Neumann boundary condition in the $y$-direction becomes $\tilde{D}_y f_{\cdot,0} = \tilde{D}_y f_{\cdot,N}=0$. Of course, the discretization involving the ghost points implies a condition that the function must be continuous on the boundary. This is an important assumption in the numerical calculation and the convergence analysis.

For the vector functions $\boldsymbol{f} = (f^x,f^y)^T$ and $\boldsymbol{g} = (g^x,g^y)^T$, with $f^x_{i+\frac{1}{2},j},g^x_{i+\frac{1}{2},j}$ evaluated at $((i+1)h, jh)$ and $f^y_{i,j+\frac{1}{2}},g^y_{i,j+\frac{1}{2}}$ evaluated at $((i+\frac{1}{2})h, (j+\frac{1}{2})h)$, respectively, the corresponding inner product is defined as 
\begin{align}
    &(\boldsymbol{f},\boldsymbol{g}):=(f^x,g^x)_x+(f^y,g^y)_y, \\
    &(f^x,g^x)_x:= (a_x(f^x g^x),1), \quad  (f^y,g^y)_y:=h^2\sum_{i,j=0}^{N-1} f^y_{i,j+\frac{1}{2}}g^y_{i,j+\frac{1}{2}} , 
\end{align}
where $a_x$ is the average operator given by $a_xf^x_{i,j}:=\frac{1}{2}(f^x_{i+\frac{1}{2},j}+f^x_{i-\frac{1}{2},j})$ and $a_y$ is defined in the same way. Based on the above definition, the summation-by-part formula holds for $\psi, \phi \in \Cperx$:
\begin{align} 
(\psi, \Delta_h \phi) = -(\nabla_h \psi, \nabla_h \phi) + (\tilde{D}_y\phi_{\cdot , N}, \psi_{\cdot , N})_\Gamma - (\tilde{D}_y\phi_{\cdot, 0}, \psi_{\cdot, 0})_\Gamma , 
\end{align}
where the boundary part on the right hand side vanishes if $\phi$ satisfies the homogeneous Neumann boundary condition. Meanwhile, $(\cdot,\cdot)_\Gamma$ is the one-dimensional inner product, defined as
\begin{equation}
    (f,g)_\Gamma := h \sum_{i,j=0}^{N-1} f_ig_i,\quad \| f \|_{2,\Gamma}^2=(f,f)_\Gamma.
\end{equation}
In addition, a modified negative Laplacian operator $L_h$ is defined. This definition differs from the standard discrete negative Laplacian, in order to incorporate the discrete homogeneous Neumann boundary conditions. Specifically, $L_h : \Cperx \rightarrow \Cperxo$, is given by 
	\begin{equation}
L_h \psi_{i,j} = 
	\begin{cases}
- D_x^2 \psi_{i,0} - 2\frac{\psi_{i,1} - \psi_{i,0}}{h^2}, & j = 0,
	\\
- D_x^2 \psi_{i,N} - 2\frac{\psi_{i,N-1}-\psi_{i,N}}{h^2}, & j = N,
	\\
-\Delta_h \psi_{i,j}, & \mbox{otherwise}.
	\end{cases}
	\end{equation}
Under the homogeneous Neumann boundary condition, $L_h$ is equivalent to the standard negative Laplacian operator, and the summation-by-part formula is valid for $L_h$: 
\begin{equation}
(\psi, L_h \phi) = (\nabla_h \psi, \nabla_h \phi), \quad  \forall \psi, \phi \in \Cperx.
\end{equation}
On the other hand, if $L_h$ is restricted to domain $\Cperxo$, it becomes a bijection. The discrete $H^{-1}$ norm is defined based on this property: 
\begin{equation}
	\| \phi \|_{-1} := ( \phi, \psi ) , \quad \mbox{for} \, \, \, \phi \in \Cperxo , 
\end{equation}
where $\psi \in \Cperxo$ is the unique solution of the equation $L_h \psi = \phi$.

\subsection{The first order convex-splitting scheme}
Based on the convex-splitting approach in both bulk and boundary parts, respectively, the following finite difference scheme is proposed: given $\phi^n \in {\mathcal C}_{\rm per}^x$, we solve nonlinear equations to find  $\phi^{n+1}, \mu^{n+1}\in {\mathcal C}_{\rm per}^x$, such that
\begin{align}  
& \frac{\phi^{n+1} - \phi^n}{\dt} = \Delta_h \mu^{n+1} , \label{scheme-CHDBC-1} \\
& \mu^{n+1} = \ln ( 1+ \phi^{n+1} ) - \ln (1 - \phi^{n+1} ) - \theta_0 \phi^n  - \varepsilon^2 \Delta_h \phi^{n+1}, \label{scheme-CHDBC-1.1} \\
& \tilde{D}_{y} \mu^{n+1}_{i, 0} = \tilde{D}_{y} \mu^{n+1}_{i, N} = 0  ,  \quad 
  \phi^{n+1}_{i,0} = \phi^{B,n+1}_{i} , \, \, \, \phi^{n+1}_{i,N} = \phi^{T,n+1}_{i} , \label{scheme-CHDBC-2}	\\
& \frac{\phi^{B,n+1} - \phi^{B,n}}{\dt} = D_x^2 \mu^{n+1}_B, \, \, \, 
  \frac{\phi^{T,n+1} - \phi^{T,n}}{\dt} = D_x^2 \mu^{n+1}_T , \label{scheme-CHDBC-3}	\\
& \mu^{n+1}_B = \ln ( 1+ \phi^{B,n+1} ) - \ln (1 - \phi^{B,n+1} ) - \theta_0 \phi^{B,n} - \kappa \varepsilon D_x^2 \phi^{B,n+1} - \varepsilon^2 \tilde{D}_{y} \phi^{n+1}_{\cdot, 0} ,\label{scheme-CHDBC-4}\\
& \mu^{n+1}_T = \ln ( 1+ \phi^{T,n+1} ) - \ln (1 - \phi^{T,n+1} ) - \theta_0 \phi^{T,n} - \kappa \varepsilon D_x^2 \phi^{T,n+1} + \varepsilon^2 \tilde{D}_{y} \phi^{n+1}_{\cdot, N}. \label{scheme-CHDBC-5}  
\end{align}
The numerical scheme obviously satisfies the bulk and surface mass conservation, which comes from the summation-by-part formula under the homogeneous Neumann and periodic boundary conditions. The positivity-preservation and free energy dissipation law are stated below. 

\begin{theorem}  \cite{CH-DBC-2024} \label{CHDBC-positivity} 
Given $\phi^n \in \Cperx$, with $-1 < \phi^n_{i,j} < 1$, $0 \le i, j \le N$,  and $\overline{\phi^n} = \beta_0$, $\overline{\phi^{B,n}}^\Gamma  = \beta_{B,0}$, $\overline{\phi^{T,n}}^\Gamma  = \beta_{T,0}$, there exists a unique solution $\phi^{n+1} \in \Cperx$ to scheme \eqref{scheme-CHDBC-1}-\eqref{scheme-CHDBC-5}, with $-1 < \phi^{n+1}_{i,j} < 1$, $0 \le i \le N-1$, $0 \le j \le N$, and $\overline{\phi^{n+1}} = \beta_0$,  $\overline{\phi^{B,n+1}}^\Gamma  = \beta_{B,0}$, $\overline{\phi^{T,n+1}}^\Gamma  = \beta_{T,0}$.
\end{theorem}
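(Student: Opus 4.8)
The plan is to recast the fully coupled nonlinear system \eqref{scheme-CHDBC-1}--\eqref{scheme-CHDBC-5} as the Euler--Lagrange equation of a single strictly convex variational problem, and then to exploit the blow-up of the logarithmic derivative at $\pm1$ to confine the minimizer to the open region $-1<\phi<1$. First I would dispose of the three mass-conservation identities, which are the routine part: applying the summation-by-parts formula to \eqref{scheme-CHDBC-1} together with the discrete homogeneous Neumann condition $\tilde{D}_y\mu^{n+1}_{\cdot,0}=\tilde{D}_y\mu^{n+1}_{\cdot,N}=0$ and the $x$-periodicity yields $(\Delta_h\mu^{n+1},1)=0$, hence $\overline{\phi^{n+1}}=\overline{\phi^n}=\beta_0$. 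In the same way the periodicity of $\mu^{n+1}_B$ and $\mu^{n+1}_T$ forces $(D_x^2\mu^{n+1}_B,1)_\Gamma=(D_x^2\mu^{n+1}_T,1)_\Gamma=0$, so the two surface averages $\overline{\phi^{B,n+1}}^\Gamma$ and $\overline{\phi^{T,n+1}}^\Gamma$ are preserved. These identities may therefore be imposed as affine constraints defining the admissible set.

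Over the convex admissible set
\[
\mathcal{A}:=\left\{\phi\in\Cperx:\ -1<\phi_{i,j}<1,\ \overline{\phi}=\beta_0,\ \overline{\phi^{B}}^{\Gamma}=\beta_{B,0},\ \overline{\phi^{T}}^{\Gamma}=\beta_{T,0}\right\},
\]
I would then introduce the functional
\[
\mathcal{J}(\phi)=\frac{1}{2\dt}\,Q(\phi-\phi^n)+E_c(\phi)-\theta_0\big[(\phi^n,\phi)+(\phi^{B,n},\phi^{B})_\Gamma+(\phi^{T,n},\phi^{T})_\Gamma\big],
\]
where $E_c$ is the discrete convex energy (the bulk and surface logarithmic entropies together with the gradient terms $\tfrac{\varepsilon^2}{2}\|\nabla_h\phi\|^2$ and the $\tfrac{\kappa\varepsilon}{2}$ surface-gradient energies), and $Q$ is the combined $H^{-1}$-type quadratic form obtained after eliminating $\mu^{n+1},\mu^{n+1}_B,\mu^{n+1}_T$: the interior increment is measured through the bulk negative Laplacian with discrete Neumann data and each surface increment through the one-dimensional periodic operator $-D_x^2$, all well-defined precisely because of the three mean-zero properties just established. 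A key structural observation is that the single discrete Dirichlet energy $\tfrac{\varepsilon^2}{2}\|\nabla_h\phi\|^2$, upon variation and summation-by-parts, reproduces simultaneously the bulk term $-\varepsilon^2\Delta_h\phi^{n+1}$ of \eqref{scheme-CHDBC-1.1} and the coupling terms $\mp\varepsilon^2\tilde{D}_y\phi^{n+1}$ of \eqref{scheme-CHDBC-4}--\eqref{scheme-CHDBC-5}, so the bulk--surface coupling is encoded automatically and convexly. Since the entropy is strictly convex componentwise, the gradient energies are convex, and $Q$ is strictly convex along mass-preserving directions, $\mathcal{J}$ is strictly convex on $\mathcal{A}$. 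Existence of a minimizer is then immediate: the entropy density extends continuously to $s=\pm1$, so $\mathcal{J}$ is continuous on the compact convex closure $\overline{\mathcal{A}}$ and attains its infimum there.

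The main obstacle, and the only genuinely delicate point, is to show that no minimizer lies on $\partial\mathcal{A}$, i.e. that the bound $-1<\phi<1$ is never active. I would argue by contradiction: suppose the minimizer $\phi^\star$ attains the value $-1$ at some grid point $p$ (the case $+1$ being symmetric). To remain in $\mathcal{A}$, the perturbation must respect all three averages, so I take a mass-preserving direction $d=e_p-e_q$, pairing $p$ with a second point $q$ of maximal value in the same mass class, i.e. an interior point with another interior point, or a bottom (resp. top) boundary point with another point of the same boundary; the latter preserves the bulk average as well, since the two equal half-weights $\omega_0$ (resp. $\omega_N$) cancel. Choosing $q$ to be maximal guarantees $\phi^\star_q\ge\beta_0>-1$ (or its boundary analogue), so $\phi_q$ stays bounded away from $-1$. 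Differentiating $\mathcal{J}(\phi^\star+t\,d)$ at $t\to0^+$, every contribution except the singular entropy at $p$ has a bounded (or favorably signed) derivative, including the $Q$-term, the convex gradient and surface-gradient energies, and crucially the smooth coupling contribution $\mp\varepsilon^2\tilde{D}_y\phi$. The entropy at $p$ contributes a derivative proportional to $\ln(1+\phi^\star_p)-\ln(1-\phi^\star_p)$, which tends to $-\infty$ as $\phi^\star_p\to-1^+$; hence $\tfrac{d}{dt}\mathcal{J}(\phi^\star+t\,d)\to-\infty$, so $\mathcal{J}$ can be strictly decreased by moving into the interior, contradicting minimality. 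Therefore $-1<\phi^\star_{i,j}<1$ at every grid point.

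Finally, since $\phi^\star$ is an interior minimizer of a smooth functional over an affine-constrained set, it satisfies the stationarity condition in which the Lagrange multipliers for the bulk, bottom, and top average constraints are exactly the three additive constants left undetermined in $\mu^{n+1},\mu^{n+1}_B,\mu^{n+1}_T$ by the inversions of $\Delta_h$ and $D_x^2$. Recovering these potentials then shows that $\phi^\star$ solves \eqref{scheme-CHDBC-1}--\eqref{scheme-CHDBC-5} exactly; uniqueness follows from the strict convexity of $\mathcal{J}$, and the membership $\phi^\star\in\mathcal{A}$ delivers both the positivity $-1<\phi^{n+1}_{i,j}<1$ and the three mass-conservation identities, completing the argument. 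This convex-analysis route is the one developed for the scheme in \cite{CH-DBC-2024}; the only step requiring real care is the boundary exclusion in the preceding paragraph.
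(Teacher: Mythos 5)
Your proposal follows essentially the same route as the paper, which presents this result as the statement that $\phi^{n+1}$ is the unique minimizer of the strictly convex functional $\mF_h^n$ over the admissible set $A_h$ and defers the details to \cite{CH-DBC-2024}: convex minimization over the mass-constrained set, existence on the compact closure, exclusion of the values $\pm 1$ via the logarithmic blow-up along a mass-preserving perturbation direction, and recovery of the chemical potentials as Lagrange multipliers. The only point to tighten is bookkeeping: because the bulk inner product carries half-weights $\omega_0=\omega_N=\tfrac12$ while the surface inner product carries full weight $h$, the surface entropy, stabilization, and $H^{-1}$ terms in the correct functional appear with the factor $2/h$ (as in the paper's $\mF_h^n$, reflecting the identity $-\Delta_h\phi_{\cdot,N}=L_h\phi_{\cdot,N}-\tfrac{2}{h}\tilde D_y\phi_{\cdot,N}$); with unit coefficients as written, your Euler--Lagrange equations would reproduce a rescaled version of \eqref{scheme-CHDBC-4}--\eqref{scheme-CHDBC-5}.
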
 

In particular, the numerical solution of \eqref{scheme-CHDBC-1}-\eqref{scheme-CHDBC-5} is equivalent to the unique minimizer and stationary point of the convex functional $\mF_h^n$, given by
\begin{align}
    \mF_h^n (\phi) & :=  \frac{1}{2\dt} \|  \phi - \phi^n \|_{-1}^2 + \frac{1}{h \dt } \|  \phi^B - \phi^{B,n} \|_{-1, \Gamma}^2 + \frac{1}{h \dt } \|  \phi^T - \phi^{T,n} \|_{-1,\Gamma}^2  \nonumber \\
& \quad +  (I(\phi),1) + \frac{2}{h}  (I(\phi^B),1)_\Gamma  + \frac{2}{h} (I(\phi^T),1)_\Gamma  \nonumber  \\
& \quad  + \frac{\varepsilon^2}{2}(\phi, L_h\phi)  - \frac{\kappa\varepsilon}{h}(\phi^B, D_x^2\phi^B)_\Gamma - \frac{\kappa\varepsilon}{h} (\phi^T, D_x^2\phi^T)_\Gamma \nonumber \\
& \quad - \theta_0 \Big( (\phi^n,\phi) + \frac{2}{h} (\phi^{B,n},\phi^B)_\Gamma + \frac{2}{h} (\phi^{T,n},\phi^T)_\Gamma \Big) \nonumber 
\end{align}
where
\begin{equation}
    I(\phi)=(1+\phi)\ln(1+\phi)+(1-\phi)\ln(1-\phi). \label{function I} 
\end{equation}
The admissible set is given as
\begin{equation}
A_h :=  \left\{  \phi \in \Cperx \ \middle| \ -1 < \phi_{i,j}  < 1 , \ 0 \le j \le N , \ i\in\mathbb{Z}, \ \overline{\phi} = \beta_0, \ \overline{\phi_B}  = \beta_{B,0}, \ \overline{\phi_T}  = \beta_{T,0} \right\}. \nonumber
\end{equation} 
Of course, we have a representation $\phi^{n+1} = \mathop{\mathrm{argmin}} \mF_h^n(\phi)$. 

The detailed proof of Theorem \ref{CHDBC-positivity} could be found in \cite{CH-DBC-2024}. It is noticed that the coefficient $2/h$ in the boundary terms comes from the difference between operator $L_h$ and the negative Laplacian one. Taking the top boundary as an example:
\begin{align}
-\varepsilon^2 \Delta_h \phi^{n+1}_{\cdot, N} & = \varepsilon^2 L_h \phi^{n+1}_{\cdot, N} - \frac{2\varepsilon^2}{h} \tilde{D}_{y} \phi^{n+1}_{\cdot, N}   \\
& = \varepsilon^2 L_h \phi^{n+1}_{\cdot, N} + \frac{2}{h} \Big( (-D_x^2)^{-1}(\frac{\phi^{T,n+1}-\phi^{T,n}}{\dt}) + I'(\phi^{T,n+1}) - \theta_0 \phi^{T,n} - \kappa\varepsilon D_x^2 \phi^{T,n+1} \Big). \nonumber
\end{align}
The additional normal derivative is replaced by equations \eqref{scheme-CHDBC-3} and \eqref{scheme-CHDBC-5}. The long stencil definition of the normal derivative yields the coefficient.

\begin{theorem} \cite{CH-DBC-2024} \label{CHDBC-energy} 
    Introduce a discrete energy as 
\begin{align} 
E_h (\phi) & := (I(\phi),1) + (I(\phi^B),1)_\Gamma + (I(\phi^T),1)_\Gamma - \frac{\theta_0}{2} \Big( \| \phi \|_2^2 + \| \phi_B \|_{2,\Gamma}^2 + \| \phi_T \|_{2,\Gamma}^2  \Big) \nonumber \\
& \quad\quad  -  \frac{\varepsilon^2}{2} \| \nabla_h \phi \|_2^2 - \frac{\kappa\varepsilon}{2} \Big( (\phi^B, D_x^2\phi^B)_\Gamma 
  + (\phi^T, D_x^2\phi^T)_\Gamma  \Big). \label{CHDBC-discrete energy}
\end{align} 
For any time step size $\dt>0$, the numerical solution of \eqref{scheme-CHDBC-1}-\eqref{scheme-CHDBC-5} satisfies the energy dissipation law
	\begin{equation} 
  E_h (\phi^{n+1} ) 
+ \dt  ( \| \nabla_h \mu^{n+1} \|_{2}^2 +  \| D_x \mu_B^{n+1} \|_{2,\Gamma}^2 +\| D_x \mu_T^{n+1} \|_{2,\Gamma}^2 ) \le E_h (\phi^n)   ,   
	\label{CHDBC-energy-0} 
	\end{equation} 
so that $E_h (\phi^n) \le E_h (\phi^0)$, for all $n \in \mathbb{N}_+$. 
\end{theorem}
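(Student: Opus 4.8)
The plan is to run the standard convex-splitting energy-stability argument, adapted to the bulk--surface coupling through the normal derivative. Throughout I write $\delta:=\phi^{n+1}-\phi^n$, $\delta_B:=\phi^{B,n+1}-\phi^{B,n}$, $\delta_T:=\phi^{T,n+1}-\phi^{T,n}$ for the temporal increments, and abbreviate $I'(\phi)=\ln(1+\phi)-\ln(1-\phi)$. \textbf{Producing the dissipation terms.} First I would take the bulk inner product of \eqref{scheme-CHDBC-1} with $\dt\,\mu^{n+1}$, and the two one-dimensional surface inner products of \eqref{scheme-CHDBC-3} with $\dt\,\mu_B^{n+1}$ and $\dt\,\mu_T^{n+1}$. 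Applying the summation-by-parts formula in the bulk together with the Neumann condition $\tilde D_y\mu^{n+1}_{\cdot,0}=\tilde D_y\mu^{n+1}_{\cdot,N}=0$ from \eqref{scheme-CHDBC-2}, and discrete $x$-periodicity on each edge, every boundary contribution vanishes and one obtains $(\delta,\mu^{n+1})=-\dt\|\nabla_h\mu^{n+1}\|_2^2$, $(\delta_B,\mu_B^{n+1})_\Gamma=-\dt\|D_x\mu_B^{n+1}\|_{2,\Gamma}^2$, and the analogous identity on the top edge. Summing these produces exactly the right-hand dissipation of \eqref{CHDBC-energy-0}; it then remains to bound the left-hand sides below by $E_h(\phi^{n+1})-E_h(\phi^n)$.

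\textbf{Cancellation of the coupling (the crux).} Substituting the definitions \eqref{scheme-CHDBC-1.1}, \eqref{scheme-CHDBC-4}, \eqref{scheme-CHDBC-5} into the tested left-hand sides, the only terms that do not immediately reassemble into an energy difference are the bulk gradient term and the normal-derivative couplings. Expanding $(\delta,-\varepsilon^2\Delta_h\phi^{n+1})$ by the stated summation-by-parts formula and using the matching conditions $(\phi^{n+1})_{\cdot,0}=\phi^{B,n+1}$, $(\phi^{n+1})_{\cdot,N}=\phi^{T,n+1}$ from \eqref{scheme-CHDBC-2} (hence $\delta_{\cdot,0}=\delta_B$ and $\delta_{\cdot,N}=\delta_T$), the boundary contributions are precisely
\[
\varepsilon^2(\tilde D_y\phi^{n+1}_{\cdot,0},\delta_B)_\Gamma-\varepsilon^2(\tilde D_y\phi^{n+1}_{\cdot,N},\delta_T)_\Gamma.
\]
These cancel identically against the coupling terms $(\delta_B,-\varepsilon^2\tilde D_y\phi^{n+1}_{\cdot,0})_\Gamma$ and $(\delta_T,+\varepsilon^2\tilde D_y\phi^{n+1}_{\cdot,N})_\Gamma$ coming from $\mu_B^{n+1}$ and $\mu_T^{n+1}$. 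I expect this exact cancellation---verifying that the normal-derivative flux recorded on the two edges is precisely what the bulk summation-by-parts deposits there---to be the main point of the proof; it is the discrete analogue of the integration-by-parts step producing \eqref{energy} at the continuous level. What survives is the interior Dirichlet quadratic form $\varepsilon^2(\nabla_h\phi^{n+1},\nabla_h\delta)$, which is a convex component of $E_h$, while the boundary-row weight $\omega_j=\tfrac12$ keeps the logarithmic and concave contributions of the bulk inner product consistent with the bulk energy $(I(\phi),1)$ and $\|\phi\|_2^2$.

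\textbf{Convexity estimates and conclusion.} With the coupling removed, each remaining group is matched to a convex component of $E_h$ through a one-sided inequality. Since $I$ is convex, $(I(\phi^{n+1})-I(\phi^n),1)\le(I'(\phi^{n+1}),\delta)$ in the bulk, and likewise on both edges. For the positive semi-definite quadratic forms generated by $L_h$ and by $-D_x^2$, the elementary identity
\[
\tfrac12(\eta^{n+1},L\eta^{n+1})-\tfrac12(\eta^n,L\eta^n)=(L\eta^{n+1},\eta^{n+1}-\eta^n)-\tfrac12(\eta^{n+1}-\eta^n,L(\eta^{n+1}-\eta^n)),
\]
valid for any symmetric $L$ and any grid function $\eta$, yields the one-sided bound $\le(L\eta^{n+1},\eta^{n+1}-\eta^n)$ by positive semi-definiteness, matching the implicit gradient and surface-diffusion terms. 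For the explicit concave contributions, $-\tfrac{\theta_0}{2}(\|\phi^{n+1}\|_2^2-\|\phi^n\|_2^2)=-\theta_0(\phi^n,\delta)-\tfrac{\theta_0}{2}\|\delta\|_2^2\le-\theta_0(\phi^n,\delta)$, with the same bound on the two edges. Adding these inequalities over bulk and surface reproduces exactly the expanded left-hand sides of the first step, whence $E_h(\phi^{n+1})-E_h(\phi^n)\le-\dt(\|\nabla_h\mu^{n+1}\|_2^2+\|D_x\mu_B^{n+1}\|_{2,\Gamma}^2+\|D_x\mu_T^{n+1}\|_{2,\Gamma}^2)$, which is \eqref{CHDBC-energy-0}. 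The bound $E_h(\phi^n)\le E_h(\phi^0)$ then follows by telescoping, the dissipation term being non-negative. Finally, the strict bound $-1<\phi^{n+1}_{i,j}<1$ guaranteed by Theorem \ref{CHDBC-positivity} ensures $I'(\phi^{n+1})$ is well-defined everywhere, so the logarithmic singularity introduces no additional difficulty.
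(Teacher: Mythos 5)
Your argument is correct and is precisely the standard convex-splitting energy estimate that the paper itself omits (deferring to \cite{CH-DBC-2024}): testing with $\dt\,\mu^{n+1}$, $\dt\,\mu_B^{n+1}$, $\dt\,\mu_T^{n+1}$, cancelling the normal-derivative fluxes deposited by the bulk summation-by-parts against those carried by $\mu_B^{n+1}$ and $\mu_T^{n+1}$, and then applying the convexity/concavity inequalities is exactly the discrete analogue of the continuous computation \eqref{energy}. The only caveat is that your estimate produces the energy with $+\frac{\varepsilon^2}{2}\|\nabla_h\phi\|_2^2$, whereas \eqref{CHDBC-discrete energy} carries a minus sign on that term --- almost certainly a typo in the statement, since the surface Dirichlet terms are written there in the (positive) form $-\frac{\kappa\varepsilon}{2}(\phi^B,D_x^2\phi^B)_\Gamma$.
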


\section{Convergence analysis and error estimate}  \label{sec:convergence analysis} 
Now we proceed into the convergence analysis. Let $\Phi$ be the exact PDE solution for the CH system \eqref{equation-CHDBC-1}-\eqref{equation-CHDBC-3}. For convenience, the superscript $B$ and $T$ denote the boundary projection to the bottom and top boundary sections, respectively. For example, $\Phi^T$ and $\Phi^B$ stand for the boundary parts of $\Phi$, and $\mu_T$, $\mu_B$ are not the boundary projections of $\mu$. With sufficiently regular initial data, it is reasonable to assume that the exact solution has a regularity of class $\mathcal{R}$:
$$
\Phi \in \mathcal{R}:=H^2\left(0, T ; C_{\text {per,x}}(\overline{\Omega})\right) \cap H^1\left(0, T ; C_{\text {per,x}}^2(\overline{\Omega})\right) \cap L^{\infty}\left(0, T ; C_{\text {per,x}}^6(\overline{\Omega})\right) , 
$$
where $C_{\text {per,x}}$ refers to the continuous function with periodic boundary condition in the $x$-direction. In addition, a separation property is assumed for the exact solution:
\begin{equation} 
1 + \Phi  \ge \epsilon_0 ,\quad  1 - \Phi \ge \epsilon_0 ,  \quad 0 \leq t \leq T, \quad  \mbox{for some $\epsilon_0 > 0$, at a point-wise level} . \label{separation property}
\end{equation}  

\subsection{Discrete mass conservation of the exact solution}

The $H^{-1}$ convergence analysis requires a discrete mean-zero property. The numerical solution unconditionally satisfies the discrete mass conservation, while the exact solution does not, because of the truncation error in the finite difference approximation. To overcome this difficulty, we need some a-priori treatments. Denote $\Phi_N(\cdot,y;t)=\mathcal{P}_x^N\Phi(\cdot,y;t)$, where $\mathcal{P}_x^N$ is the spatial Fourier projection (in the $x$-direction) to ${\cal B}^K$, the space of trigonometric polynomials of degree to and including $K$ (with $N=2K+1$). The following projection approximation is standard: if $\Phi \in L^\infty(0,T;H^\ell_{\rm per,x}(\Omega))$, $\ell\in\mathbb{N}$, we have 
\begin{equation} 
\| \Phi_N - \Phi\|_{L^\infty(0,T;H^k)} \leq C h^{\ell-k} \| \Phi \|_{L^\infty(0,T;H^\ell)} ,   
  \quad  \forall 0 \le k \le \ell . 
 \label{Fourier-approximation}
\end{equation}
In fact, the Fourier projection estimate does not automatically ensure the positivity of $1 + \Phi_N$ and $1 - \Phi_N$; on the other hand, one could enforce the separation property that $1 + \Phi_N \ge \frac{3}{4} \epsilon_0$, $1 - \Phi_N \ge \frac{3}{4} \epsilon_0$,  if $h$ is taken sufficiently small. 
Consequently, the discrete mass conservation of the projection solution on the boundary is valid:
\begin{align}
\overline{\Phi_N^{B,n+1}}^\Gamma &= \frac{1}{|\Gamma|}\int_\Gamma \Phi_N^{B,n+1} \dS = \frac{1}{|\Gamma|}\int_\Gamma \Phi_N^{B,n} \dS = \overline{\Phi_N^{B,n}}^\Gamma, \label{surface mass conservation 1} \\
\overline{\Phi_N^{T,n+1}}^\Gamma &= \frac{1}{|\Gamma|}\int_\Gamma \Phi_N^{T,n+1} \dS = \frac{1}{|\Gamma|}\int_\Gamma \Phi_N^{T,n} \dS = \overline{\Phi_N^{T,n}}^\Gamma, \label{surface mass conservation 2}
\end{align}
for any $n\in \mathbb{N}$. However, the Fourier projection in the $y$-direction is not available for the exact solution, which comes from the dynamical boundary condition. As an alternative, an auxiliary cosine function is introduced to enforce the bulk mass conservation at a discrete level:
\begin{equation}
\delta \Phi(x,y,t) = \left(\overline{\Phi_N}-\overline{\Phi_N^0}\right)\left( 1 - \cos(2 \pi y) \right), 
  \quad x, y \in [0,1] , \label{sine auxiliary function 1}
\end{equation}
where $\overline{\phi}=\frac{1}{|\Omega|}(\phi,1)$. This auxiliary function is a constant in the $x$-direction, and its independence on $x$ leads to the periodic nature (in the $x$-direction) and consistency with equation \eqref{DBC-Periodical-1}-\eqref{DBC-Periodical-7}. On the physical boundary sections, namely $y=0$ and $y=1$, this auxiliary function vanishes and satisfies the homogeneous Neumann boundary condition $\partial_n \delta \Phi |_{y=0,1} = 0$, so that it does not affect the discrete mass conservation on the boundary and the long stencil centered difference of the normal derivative. In addition, a trigonometric function is sufficiently smooth and regular. Under the reasonable assumption $\overline{\Phi_N^0}=\overline{\phi^0}$, the auxiliary function could be rewritten as
\begin{equation}
\delta \Phi(x,y,t) = (\overline{\Phi_N}-\overline{\Phi_N^0}) ( 1 - \cos(2 \pi y) ) = (\overline{\Phi_N}-\overline{\phi^0} ) ( 1 - \cos(2 \pi y) ). \label{sine auxiliary function 2}
\end{equation}
Such a cosine function will be useful in the subsequent analysis. 

The following lemma describes the core nature of the auxiliary function, based on the spatial discretization. The spatial truncation error bound could be obtained by using a straightforward Taylor expansion, as well as estimate \eqref{Fourier-approximation} for the projection solution:
\begin{align}
    & \partial_t \Phi_N = \Delta_h \mathcal{V} + \tau_h, \label{spatial semi-discretization 1} \\
    & \mathcal{V} = \ln ( 1+ \Phi_N ) - \ln (1 - \Phi_N ) - \theta_0 \Phi_N - \varepsilon^2 \Delta_h \Phi_N, \label{spatial semi-discretization 2} \\
    & \tilde{D}_{y} \mathcal{V}_{i, 0} = \tilde{D}_{y} \mathcal{V}_{i, N} = 0  ,  \quad 
  (\Phi_N)_{i,0} = (\Phi^{B}_N)_{i} , \, \, \, (\Phi_N)_{i,N} = (\Phi^{T}_N)_{i}, \label{spatial semi-discretization 3} \\
    & \partial_t \Phi^{B}_N = D_x^2 \mathcal{V}_B + \tau_{B,h}, \, \, \,  \partial_t \Phi^{T}_N = D_x^2 \mathcal{V}_T + \tau_{T,h} , \label{spatial semi-discretization 4}	\\
    & \mathcal{V}_B = \ln ( 1+ \Phi^{B}_N ) - \ln (1 - \Phi^{B}_N ) - \theta_0 \Phi^{B}_N - \kappa\varepsilon D_x^2 \Phi^{B}_N - \varepsilon^2 (\tilde{D}_{y} \Phi_N)_{\cdot, 0} ,\label{spatial semi-discretization 5}\\
& \mathcal{V}_T = \ln ( 1+ \Phi^{T}_N ) - \ln (1 - \Phi^{T}_N ) - \theta_0 \Phi^{T}_N - \kappa\varepsilon D_x^2 \Phi^{T}_N + \varepsilon^2 (\tilde{D}_{y} \Phi_N)_{\cdot, N} , \label{spatial semi-discretization 6}
\end{align}
where
\begin{equation}
\|\tau_h\|_2,\ \|\tau_{B,h}\|_{2,\Gamma},\ \|\tau_{T,h}\|_{2,\Gamma} \leq O(h^2).
\end{equation}

\begin{lem} \label{sine function property}
    The auxiliary cosine function $\delta \Phi$, defined by \eqref{sine auxiliary function 1}, satisfies the following properties.
    \begin{itemize}
        \item[$(1)$] $\delta \Phi$ is infinitesimal under the discrete $\ell^2$-norm and has order
        \begin{equation}
            \|\delta \Phi^n\|_2 \leq C h^2,\quad 1 \leq n \leq N ,  \label{auxiliary function order}
        \end{equation}
        where $\delta \Phi^n=\delta \Phi(\cdot,\cdot,t_n)$ and $C$ is a constant independent on $\dt$ and $h$.
        \item[$(2)$] The following corrected discrete mass conservation is valid: 
        \begin{equation}
            \overline{\Phi_N-\delta \Phi}=\overline{\Phi_N^0}=\overline{\phi^0}.  \label{bulk mass conservation}
        \end{equation}
    \end{itemize}
\end{lem}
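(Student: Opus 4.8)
The plan is to treat the two properties in turn, establishing the exact mass-conservation identity (2) first by a direct quadrature computation, and then using continuous mass conservation together with a quadrature-error estimate to bound the amplitude factor appearing in (1).

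For property (2), the observation is that $\delta\Phi = (\overline{\Phi_N}-\overline{\Phi_N^0})(1-\cos(2\pi y))$ is independent of $x$, so I would factor the discrete mean as $\overline{\delta\Phi} = (\overline{\Phi_N}-\overline{\Phi_N^0})\,\overline{(1-\cos(2\pi y))}$ and evaluate $\overline{(1-\cos(2\pi y))}$ explicitly. The discrete mean of the constant $1$ is exactly $1$, since the trapezoidal weights satisfy $h\sum_{j=0}^N\omega_j = h\big(\tfrac12 + (N-1) + \tfrac12\big) = 1$. The discrete mean of $\cos(2\pi y)$ vanishes exactly: with $y_j = jh$ and the endpoint weights $\tfrac12$, the endpoint terms coincide by periodicity and the sum reduces to $h\sum_{j=0}^{N-1}\cos(2\pi j/N)$, which is the real part of a complete sum of $N$-th roots of unity and therefore equals $0$. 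Hence $\overline{(1-\cos(2\pi y))} = 1$, giving $\overline{\delta\Phi} = \overline{\Phi_N}-\overline{\Phi_N^0}$ and thus $\overline{\Phi_N - \delta\Phi} = \overline{\Phi_N^0} = \overline{\phi^0}$, where the last equality is the stated assumption.

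For property (1), I would factor $\|\delta\Phi^n\|_2 = |\overline{\Phi_N^n}-\overline{\Phi_N^0}|\cdot\|1-\cos(2\pi y)\|_2$, in which the second factor is a fixed $O(1)$ constant, so the task reduces to proving $|\overline{\Phi_N^n}-\overline{\Phi_N^0}| \le Ch^2$. Writing $\langle\Phi_N\rangle := \frac{1}{|\Omega|}\int_\Omega\Phi_N\,\dx$ for the continuous mean, I would insert the decomposition
\[
\overline{\Phi_N^n}-\overline{\Phi_N^0} = \big(\overline{\Phi_N^n}-\langle\Phi_N^n\rangle\big) + \big(\langle\Phi_N^n\rangle-\langle\Phi_N^0\rangle\big) + \big(\langle\Phi_N^0\rangle-\overline{\Phi_N^0}\big).
\]
The middle term vanishes: the exact solution conserves bulk mass, since integrating $\partial_t\Phi = \Delta\mu$ over $\Omega$ and using $\partial_{\n}\mu=0$ on $y=0,1$ together with the cancellation of the periodic fluxes yields $\frac{d}{dt}\int_\Omega\Phi\,\dx = 0$; moreover the $x$-Fourier projection $\mathcal{P}_x^N$ preserves the zeroth $x$-mode, so $\int_\Omega\Phi_N\,\dx = \int_\Omega\Phi\,\dx$ is likewise time-independent. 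For the two outer terms, the cell-centered $x$-quadrature is exact on the trigonometric polynomial $\Phi_N$, so the discrepancy between $\overline{\Phi_N}$ and $\langle\Phi_N\rangle$ is exactly the $y$-direction composite trapezoidal error, which is $O(h^2)\,\|\Phi_N\|_{C^2}$; by the projection estimate \eqref{Fourier-approximation} and the regularity class $\mathcal{R}$ this is $O(h^2)$ uniformly in time. Combining the three terms gives $|\overline{\Phi_N^n}-\overline{\Phi_N^0}| \le Ch^2$ with $C$ depending only on $\|\Phi\|_{\mathcal{R}}$, hence independent of $\dt$ and $h$.

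The step I expect to be the crux is the middle argument of property (1): one must isolate the $y$-direction trapezoidal quadrature as the \emph{only} source of the discrepancy between $\overline{\Phi_N^n}$ and $\overline{\Phi_N^0}$, which requires cleanly separating the exact continuous mass conservation — valid both for $\Phi$ and, essentially, for its $x$-projection $\Phi_N$ — from the $O(h^2)$ numerical integration error. It is worth emphasizing that the cosine profile is chosen precisely so that its discrete mean is \emph{exactly} $1$ rather than $1+O(h^2)$, which is what makes property (2) an exact identity and guarantees that $\delta\Phi$ corrects the bulk mass without introducing a residual.
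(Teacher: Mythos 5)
Your proof is correct, and part (2) is essentially the paper's argument: both reduce to showing that the discrete mean of $1-\cos(2\pi y)$ is exactly $1$, the paper via the explicit geometric-sum formula \eqref{summation of sine} and you via the vanishing of a complete sum of $N$-th roots of unity. Part (1), however, takes a genuinely different route. The paper never compares discrete and continuous means directly; instead it sums the semi-discrete consistency system \eqref{spatial semi-discretization 1}--\eqref{spatial semi-discretization 6} over the grid, uses discrete summation-by-parts under the discrete Neumann and periodic conditions to obtain $\partial_t \overline{\Phi_N} = \overline{\tau_h}$, and integrates in time to get $|\overline{\Phi_N^n}-\overline{\Phi_N^0}| \le \int_0^T |\overline{\tau_h}|\,dt \le C_{\Omega,T}h^2$ from the $O(h^2)$ bound on the spatial truncation error. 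You instead insert the continuous mean, invoke exact mass conservation of the PDE (inherited by $\Phi_N$ since $\mathcal{P}_x^N$ preserves the zeroth $x$-mode), and bound the two boundary terms as pure quadrature errors — exactness of the cell-centered $x$-rule on trigonometric polynomials of degree $\le K$ with $N=2K+1$ nodes, plus the $O(h^2)$ composite trapezoidal error in $y$. Your version is more self-contained (it does not lean on the semi-discrete consistency machinery) and makes transparent that the drift is purely a $y$-quadrature effect; the paper's version is shorter given that \eqref{spatial semi-discretization 1}--\eqref{spatial semi-discretization 6} is already set up for the subsequent convergence proof, and it generalizes more readily to discretizations where the $x$-quadrature is not exact. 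The only point you gloss over is the uniform-in-$h$ bound on $\|\partial_y^2\Phi_N\|_{L^\infty}$ needed for the trapezoidal error (the Fourier projection is not uniformly bounded on $L^\infty$, so one should pass through an $H^1_x\hookrightarrow L^\infty_x$ embedding using the regularity class $\mathcal{R}$), but this is at the same level of rigor as the paper's own appeal to \eqref{Fourier-approximation}.
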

\begin{proof}
    It is noticed that the cosine function has enough regularity and the infinitesimal property of $\delta \Phi$ is only dependent on the discrete mass change $(\overline{\Phi_N}-\overline{\Phi_N^0})$. Performing a discrete summation on both sides of \eqref{spatial semi-discretization 1}, with the help of the summation-by-part formula under homogeneous Neumann boundary condition, we observe the following equality:
    \begin{equation}
        \partial_t  \overline{\Phi_N} = \overline{\tau_h} .\label{mass order 1}
    \end{equation}
    In turn, an integration in time from $t_0$ to $t_n$ gives 
    \begin{align}
        \overline{\Phi^{n}_N}-\overline{\Phi_N^0} = \int_{0}^{t_n} \overline{\tau_h} dt \leq \int_{0}^{T} |\overline{\tau_h}| dt
         \leq C_{\Omega, T} h^2, \label{mass order 2}
    \end{align}
    where the coefficient $C_{\Omega, T}$ is only dependent on $\Omega$ and final time $T$, independent of $\dt$ and $h$. Subsequently, \eqref{auxiliary function order} becomes a straightforward consequence of  \eqref{mass order 2}.

    Identity \eqref{bulk mass conservation} comes from a direct calculation. Based on \eqref{sine auxiliary function 1} and the definition of the discrete $\ell^2$ inner product, we see that 
    \begin{align}
        \overline{\Phi_N-\delta \Phi} & = \overline{\Phi_N}-\overline{\delta \Phi} = \overline{\Phi_N} - (\overline{\Phi_N}-\overline{\Phi_N^0}) h \sum_{j=1}^N \left( 1- \cos(2 \pi jh) \right) \nonumber \\
	& = \overline{\Phi_N} - (\overline{\Phi_N}-\overline{\Phi_N^0})\left(1 - \frac{{\rm e}^{2\pi ih}}{2} \frac{h(1- {\rm e}^{2 \pi i})}{1-{\rm e}^{2 \pi ih}}- \frac{{\rm e}^{-2\pi ih}}{2} \frac{h(1- {\rm e}^{-2 \pi i})}{1- {\rm e}^{-2 \pi ih}}\right)  \nonumber \\
	& = \overline{\Phi_N} - (\overline{\Phi_N}-\overline{\Phi_N^0}) 
	 = \overline{\Phi_N^0} ,  \label{bulk mass conservation proof}
    \end{align}
in which the classic summation of cosine functions is applied:
    \begin{equation}
        \sum_{j=1}^N \cos(2jx) =\frac{{\rm e}^{i2x}}{2} \frac{1- {\rm e}^{i2Nx}}{1- {\rm e}^{i2x}} + \frac{{\rm e}^{-i2x}}{2} \frac{1- {\rm e}^{-i2Nx}}{1-{\rm e}^{-i2x}} . \label{summation of sine}
    \end{equation}
\end{proof}

As a result of Lemma \ref{sine function property}, we proceed into an introduction of a corrected solution 
\begin{equation}
\hat{\Phi}_N=\Phi_N-\delta\Phi . \label{corrected solution}
\end{equation}
With a temporal discretization, a modified error function is defined as 
\begin{equation}
\tilde{\phi}^n=\hat{\Phi}_N^n-\phi^n, \quad n \in \mathbb{N}. \label{define error function}
\end{equation}
On one hand, \eqref{bulk mass conservation} ensures the bulk mass conservation of $\hat{\Phi}_N$ as the surface mass conservation \eqref{surface mass conservation 1}-\eqref{surface mass conservation 2} is not affected. The homogeneous Neumann boundary condition of the cosine function plays an important role in this property. On the other hand, \eqref{auxiliary function order} indicates that the auxiliary function has the same spatial order as the truncation error and is independent with time step, so that it has no effect on the convergence analysis. In turn, a substitution of the  corrected profile gives the following fully discrete truncation error estimate: 
\begin{align}  
& \frac{\hat{\Phi}_N^{n+1} - \hat{\Phi}_N^n}{\dt} = \Delta_h \hat{\mathcal{V}}^{n+1} + \tau^{n+1} , \label{truncation-2.1} \\
& \hat{\mathcal{V}}^{n+1} = \ln ( 1+ \hat{\Phi}_N^{n+1} ) - \ln (1 - \hat{\Phi}_N^{n+1} ) - \theta_0 \hat{\Phi}_N^n - \varepsilon^2 \Delta_h \hat{\Phi}_N^{n+1}, \label{truncation-2.2} \\
& \tilde{D}_{y} \hat{\mathcal{V}}^{n+1}_{i, 0} = \tilde{D}_{y} \hat{\mathcal{V}}^{n+1}_{i, N} = 0  ,  \quad 
  (\hat{\Phi}^{n+1}_N)_{i,0} = (\hat{\Phi}^{B,n+1}_N)_{i} , \, \, \, (\hat{\Phi}^{n+1}_N)_{i,N} = (\hat{\Phi}^{T,n+1}_N)_{i} , \label{truncation-2.3}	\\
& \frac{\hat{\Phi}_N^{B,n+1} - \hat{\Phi}_N^{B,n}}{\dt} = D_x^2 \hat{\mathcal{V}}^{n+1}_B + \tau^{n+1}_B, \, \, \, 
  \frac{\hat{\Phi}_N^{T,n+1} - \hat{\Phi}_N^{T,n}}{\dt} = D_x^2 \hat{\mathcal{V}}^{n+1}_T + \tau^{n+1}_T , \label{truncation-2.4}	\\
& \hat{\mathcal{V}}^{n+1}_B = \ln ( 1+ \hat{\Phi}^{B,n+1}_N ) - \ln (1 - \hat{\Phi}^{B,n+1}_N ) - \theta_0 \hat{\Phi}^{B,n}_N - \varepsilon\kappa D_x^2 \hat{\Phi}^{B,n+1}_N - \varepsilon^2 (\tilde{D}_{y} \hat{\Phi}_N^{n+1})_{\cdot, 0} ,\label{truncation-2.5}\\
& \hat{\mathcal{V}}^{n+1}_T = \ln ( 1+ \hat{\Phi}^{T,n+1}_N ) - \ln (1 - \hat{\Phi}^{T,n+1}_N ) - \theta_0 \hat{\Phi}^{T,n}_N - \varepsilon\kappa D_x^2 \hat{\Phi}^{T,n+1}_N + \varepsilon^2 (\tilde{D}_{y} \hat{\Phi}_N^{n+1})_{\cdot, N} \label{truncation-2.6}
\end{align}
where 
\begin{equation}
\|\tau^{n+1}\|_2,\ \|\tau^{n+1}_B\|_{2,\Gamma},\ \|\tau^{n+1}_T\|_{2,\Gamma} \leq O(\dt+h^2). \label{truncation-2.7}
\end{equation}
The separation property, $1 + \hat{\Phi}_N \ge \frac{1}{2} \epsilon_0$, $1 - \hat{\Phi}_N \ge \frac{1}{2} \epsilon_0$, also holds provided that $\dt$ and $h$ are sufficiently small. Based on the separation property of the corrected solution, the consistency analysis of the logarithmic terms in the chemical potential expansion \eqref{truncation-2.2} comes from a standard Taylor expansion
\begin{align}
& \ln (1+\hat{\Phi}_N^{n+1})=\ln (1+\Phi_N^{n+1}-\delta\Phi^{n+1}) = \ln(1+\Phi_N^{n+1})-\frac{\delta\Phi^{n+1}}{1+\Phi_{1,*}}, \\
& \ln (1-\hat{\Phi}_N^{n+1})=\ln (1-\Phi_N^{n+1}+\delta\Phi^{n+1}) = \ln(1-\Phi_N^{n+1})+\frac{\delta\Phi^{n+1}}{1-\Phi_{2,*}} , 
\end{align}
where both $\Phi_{1,*}$ and $\Phi_{2,*}$ are located between $\Phi_N^{n+1}$ and $\hat{\Phi}_N^{n+1}$, at a  point-wise level. The separation inequalities, $1\pm\Phi_{1,*} \ge \frac{1}{2} \epsilon_0$, $1\pm\Phi_{2,*} \ge \frac{1}{2} \epsilon_0$, are also valid. A combination of the separability and \eqref{auxiliary function order} (in Lemma \ref{sine function property}) leads to a full order estimate
\begin{equation}
\| \frac{\delta\Phi^{n+1}}{1+\Phi_{1,*}}\|_2,\ \| \frac{\delta\Phi^{n+1}}{1-\Phi_{1,*}}\|_2 \leq \frac{C}{\epsilon_0}(\dt+h^2) ,  \label{order estimate}
\end{equation}
which has already been involved in the truncation error \eqref{truncation-2.7}. In turn, subtracting the numerical solution \eqref{scheme-CHDBC-1}-\eqref{scheme-CHDBC-5} from \eqref{truncation-2.1}-\eqref{truncation-2.6} gives
\begin{align}
& \frac{\tilde{\phi}^{n+1}-\tilde{\phi}^n}{\dt} = \Delta_h \tilde{\mu}^{n+1} + \tau^{n+1} , \label{error eq-1} \\
& \tilde{\mu}^{n+1} = \ln(1+\hat{\Phi}_N^{n+1})-\ln(1-\hat{\Phi}_N^{n+1})-\ln(1+\phi^{n+1})+\ln(1-\phi^{n+1}) - \theta_0 \tilde{\phi}^n - \varepsilon^2 \Delta_h \tilde{\phi}^{n+1}, \label{error eq-2} \\
& \tilde{D}_{y} \tilde{\mu}^{n+1}_{i, 0} = \tilde{D}_{y} \tilde{\mu}^{n+1}_{i, N} = 0  ,  \quad 
  \tilde{\phi}^{n+1}_{i,0} = \tilde{\phi}^{B,n+1}_{i} , \, \, \, \tilde{\phi}^{n+1}_{i,N} =\tilde{\phi}^{T,n+1}_{i} , \label{error eq-3}	\\
& \frac{\tilde{\phi}^{B,n+1} - \tilde{\phi}^{B,n}}{\dt} = D_x^2 \tilde{\mu}^{n+1}_B + \tau^{n+1}_B, \, \, \, 
  \frac{\tilde{\phi}^{T,n+1} - \tilde{\phi}^{T,n}}{\dt} = D_x^2 \tilde{\mu}^{n+1}_T + \tau^{n+1}_T, \label{error eq-4}	\\
& \tilde{\mu}^{n+1}_B = \ln(1+\hat{\Phi}_N^{B,n+1})-\ln(1-\hat{\Phi}_N^{B,n+1})-\ln(1+\phi^{B,n+1})+\ln(1-\phi^{B,n+1}) - \theta_0 \tilde{\phi}^{B,n} \nonumber \\
& \quad \quad \quad - \varepsilon\kappa D_x^2 \tilde{\phi}^{B,n+1} - \varepsilon^2 \tilde{D}_{y} \tilde{\phi}^{n+1}_{\cdot, 0} ,\label{error eq-5}\\
& \tilde{\mu}^{n+1}_T = \ln(1+\hat{\Phi}_N^{T,n+1})-\ln(1-\hat{\Phi}_N^{T,n+1})-\ln(1+\phi^{T,n+1})+\ln(1-\phi^{T,n+1}) - \theta_0 \tilde{\phi}^{T,n} \nonumber \\
& \quad \quad \quad - \varepsilon\kappa D_x^2 \tilde{\phi}^{T,n+1} + \varepsilon^2 \tilde{D}_{y} \tilde{\phi}^{n+1}_{\cdot, N}. \label{error eq-6}
\end{align}
Subsequently, by the above correction approach, it follows that $\overline{\tilde{\phi}^{B,n}}^\Gamma = \overline{\tilde{\phi}^{T,n}}^\Gamma = \overline{\tilde{\phi}^n}=0$, for any $n \in \mathbb{N}$, so that both $\| \cdot \|_{-1,\Gamma}$ and $\| \cdot \|_{-1}$ are well defined for the numerical error grid function $\tilde{\phi}^n$ and the boundary projection. In addition, the following summation-by-part formula is valid for any $\psi \in \Cperx$, because of the special boundary property of $\delta\Phi$: 
\begin{align}
    ( \Delta_h \tilde{\phi}^{n+1},\psi ) &= - ( \nabla_h \tilde{\phi}^{n+1}, \nabla_h \psi  ) + (\tilde{D}_y \tilde{\phi}^{n+1}_{\cdot , N}, \psi_{\cdot , N})_\Gamma - (\tilde{D}_y \tilde{\phi}^{n+1}_{\cdot, 0}, \psi_{\cdot, 0})_\Gamma  \nonumber  \\
    & = - ( \nabla_h \tilde{\phi}^{n+1}, \nabla_h \psi  ) + (\tilde{D}_y (\hat{\Phi}_N)^{n+1}_{\cdot , N}, \psi_{\cdot , N})_\Gamma - (\tilde{D}_y (\hat{\Phi}_N)^{n+1}_{\cdot, 0}, \psi_{\cdot, 0})_\Gamma . 
\end{align}

\begin{rem}
Such an auxiliary function approach has also been reported in the higher-order consistency analysis of the singular chemical potential \cite{Dong2022a, GuoY24, LiuC2021a}, where a strong correction function is based on the Taylor expansion, and a higher order truncation error is reached via an asymptotic analysis. In the convergence analysis of the dynamical boundary condition, the auxiliary function turns out to be a weak version, in the sense that the truncation error still holds the initial rate. Of course, it could be combined with higher-order consistency analysis or other theoretical approaches.
\end{rem}

\begin{rem}\label{order remark}
Two requirements play an important role in the choice of auxiliary function. The first is a proper boundary condition, in comparison with the physical boundary in the original PDE. The mass conservation on the physical surface and the summation-by-part formula must not be violated. This is the reason why we choose the trigonometric function that simultaneously satisfies both the homogeneous Neumann boundary condition and homogeneous Dirichlet boundary condition. The second is a sufficient regularity required by the Taylor expansion of the logarithmic terms in \eqref{truncation-2.1}-\eqref{truncation-2.6}. In terms of the accuracy order, it is independent of temporal discretization and can always be covered by the spatial truncation error, which implies that this approach is also valid for the higher-order schemes, as well as for other boundary conditions. 
\end{rem}

\subsection{The convergence estimate in the $H^{-1}$ norm}

The following theorem is the main result of this article.
\begin{theorem}\label{Theorem 1}
Given smooth initial data $\Phi(\cdot,t=0)$, suppose the exact solution for CH equation with dynamical boundary condition has regularity of class $\mathcal{R}$. Provided that $\dt$ and $h$ are sufficiently small, we have
\begin{equation}
\|\tilde{\phi}^{n+1}\|_{-1}+\|\tilde{\phi}^{n+1}\|_{-1,\Gamma} + \dt\sum_{k=1}^n \left( \varepsilon^2 \|\nabla_h \tilde{\phi}^{k}\|_2^2 + \varepsilon\kappa \|D_x\tilde{\phi}^{k}\|_{2,\Gamma}^2 \right) \leq C_{\Omega,T}^*(\dt + h^2) , 
 \label{convergence theorem} 
\end{equation}
where for simplicity
\begin{align}
&\|\tilde{\phi}^{n+1}\|_{-1,\Gamma}^2:=\|\tilde{\phi}^{T,n+1}\|_{-1,\Gamma}^2 
 +\|\tilde{\phi}^{B,n+1}\|_{-1,\Gamma}^2, \label{simplicity -1 norm} \\
& \|D_x\tilde{\phi}^{k}\|_{2,\Gamma}^2 := \|D_x\tilde{\phi}^{k,T}\|_{2,\Gamma}^2 + \|D_x\tilde{\phi}^{k,B}\|_{2,\Gamma}^2, \quad 1 \leq k \leq n. \label{simplicity -1 norm 0}
\end{align}
The positive constant $C_{\Omega,T}^*$ is only dependent on the exact solution, as well as $\Omega$ and $T$.  
\end{theorem}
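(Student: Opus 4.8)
The plan is to run a combined discrete $H_h^{-1}$ energy estimate on the coupled bulk--surface error system \eqref{error eq-1}--\eqref{error eq-6}. The whole construction of Section~\ref{sec:convergence analysis} is what makes this possible: since $\overline{\tilde{\phi}^n}=\overline{\tilde{\phi}^{B,n}}^\Gamma=\overline{\tilde{\phi}^{T,n}}^\Gamma=0$ for every $n$, the inverse operators $L_h^{-1}$ and $(-D_x^2)^{-1}$ act on $\tilde{\phi}^{n+1}$, $\tilde{\phi}^{B,n+1}$, $\tilde{\phi}^{T,n+1}$ and the three discrete negative norms are well defined. First I would test the bulk equation \eqref{error eq-1} against $L_h^{-1}\tilde{\phi}^{n+1}$. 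The left side telescopes via $(\tilde{\phi}^{n+1}-\tilde{\phi}^n, L_h^{-1}\tilde{\phi}^{n+1}) = \tfrac12(\|\tilde{\phi}^{n+1}\|_{-1}^2-\|\tilde{\phi}^n\|_{-1}^2+\|\tilde{\phi}^{n+1}-\tilde{\phi}^n\|_{-1}^2)$, and, using $\tilde{D}_y\tilde{\mu}^{n+1}=0$ on $y=0,1$ together with the summation-by-parts identity, $(\Delta_h\tilde{\mu}^{n+1},L_h^{-1}\tilde{\phi}^{n+1})$ collapses to $-(\tilde{\mu}^{n+1},\tilde{\phi}^{n+1})$. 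Inserting \eqref{error eq-2} and integrating the diffusion term by parts once more exposes the bulk dissipation $\varepsilon^2\|\nabla_h\tilde{\phi}^{n+1}\|_2^2$, the nonnegative logarithmic contribution, the concave $\theta_0$ term, and the two interface remainders $\varepsilon^2(\tilde{D}_y\tilde{\phi}^{n+1}_{\cdot,N},\tilde{\phi}^{T,n+1})_\Gamma$ and $-\varepsilon^2(\tilde{D}_y\tilde{\phi}^{n+1}_{\cdot,0},\tilde{\phi}^{B,n+1})_\Gamma$.

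In parallel I would test the two surface equations in \eqref{error eq-4} against $(-D_x^2)^{-1}\tilde{\phi}^{T,n+1}$ and $(-D_x^2)^{-1}\tilde{\phi}^{B,n+1}$ in $(\cdot,\cdot)_\Gamma$. These produce the surface $H_h^{-1}$ telescoping, the surface dissipations $\varepsilon\kappa\|D_x\tilde{\phi}^{T,n+1}\|_{2,\Gamma}^2$ and $\varepsilon\kappa\|D_x\tilde{\phi}^{B,n+1}\|_{2,\Gamma}^2$, nonnegative logarithmic terms, and --- because $\tilde{\mu}_T^{n+1}$ carries $+\varepsilon^2\tilde{D}_y\tilde{\phi}^{n+1}_{\cdot,N}$ while $\tilde{\mu}_B^{n+1}$ carries $-\varepsilon^2\tilde{D}_y\tilde{\phi}^{n+1}_{\cdot,0}$ --- interface remainders of exactly opposite sign to those from the bulk step. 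Adding the bulk estimate to the two surface estimates with weight one then cancels all normal-derivative coupling terms identically; this exact cancellation is the structural heart of the argument. For the logarithmic terms, monotonicity of $s\mapsto\ln(1+s)-\ln(1-s)$ makes each pairing such as $(\ln(1+\hat{\Phi}_N^{n+1})-\ln(1-\hat{\Phi}_N^{n+1})-\ln(1+\phi^{n+1})+\ln(1-\phi^{n+1}),\,\tilde{\phi}^{n+1})$ pointwise nonnegative, so they may be dropped from the left-hand side, and the singularity thus causes no additional trouble.

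It remains to bound the right-hand side. The truncation pairings $(\tau^{n+1},L_h^{-1}\tilde{\phi}^{n+1})$, $(\tau_B^{n+1},(-D_x^2)^{-1}\tilde{\phi}^{B,n+1})_\Gamma$ and $(\tau_T^{n+1},(-D_x^2)^{-1}\tilde{\phi}^{T,n+1})_\Gamma$ are handled by $|(\tau^{n+1},L_h^{-1}\tilde{\phi}^{n+1})|\le\|\tau^{n+1}\|_{-1}\|\tilde{\phi}^{n+1}\|_{-1}\le\|\tau^{n+1}\|_2\|\tilde{\phi}^{n+1}\|_{-1}$, using \eqref{truncation-2.7} and Young's inequality to split off an $O((\dt+h^2)^2)$ forcing plus an absorbable $\|\tilde{\phi}^{n+1}\|_{-1}^2$; the mean-zero property of $\tau^{n+1},\tau_B^{n+1},\tau_T^{n+1}$, inherited from the corrected mass conservation of Lemma~\ref{sine function property} and from \eqref{surface mass conservation 1}--\eqref{surface mass conservation 2}, is precisely what legitimizes these inverse-norm pairings. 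The concave $\theta_0$ terms $\theta_0(\tilde{\phi}^n,\tilde{\phi}^{n+1})$ and their surface analogues are the more delicate piece: I would control them through the interpolation $\|\psi\|_2^2=(\nabla_h\psi,\nabla_h L_h^{-1}\psi)\le\|\nabla_h\psi\|_2\|\psi\|_{-1}$ (and its $\Gamma$ counterpart) followed by Young's inequality, routing a small multiple of $\|\nabla_h\tilde{\phi}\|_2^2$ into the dissipation already on the left and leaving only an $O(1)$ multiple of $\|\tilde{\phi}\|_{-1}^2$. Finally, multiplying by $2\dt$, summing in $n$ (with $\tilde{\phi}^0=0$, so the telescoped initial terms vanish and the shifted dissipation sums align), and applying the discrete Gronwall inequality yields $\|\tilde{\phi}^{n+1}\|_{-1}^2+\|\tilde{\phi}^{n+1}\|_{-1,\Gamma}^2+\dt\sum_k(\varepsilon^2\|\nabla_h\tilde{\phi}^k\|_2^2+\varepsilon\kappa\|D_x\tilde{\phi}^k\|_{2,\Gamma}^2)\le C_{\Omega,T}(\dt+h^2)^2$, whence \eqref{convergence theorem}.

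I expect the main obstacle to be the bulk--surface coupling: arranging the two summation-by-parts steps so the normal-derivative interface terms emerge with matching magnitude and opposite sign, since any discrepancy in coefficient or in the weighting between $(\cdot,\cdot)$ and $(\cdot,\cdot)_\Gamma$ would leave an uncontrolled boundary contribution that no dissipation term could absorb. A secondary difficulty is confirming that the concave $\theta_0$ contribution is genuinely absorbable via the $\|\cdot\|_2^2\le\|\nabla_h\cdot\|_2\|\cdot\|_{-1}$ interpolation without spoiling the Gronwall closure.
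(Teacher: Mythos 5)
Your proposal is correct and follows essentially the same route as the paper: testing the bulk error equation with $(-\Delta_h)^{-1}\tilde{\phi}^{n+1}$ and the surface equations with $(-D_x^2)^{-1}\tilde{\phi}^{B/T,n+1}$, dropping the convex logarithmic pairings, cancelling the normal-derivative interface terms between bulk and surface estimates, bounding the concave $\theta_0$ and truncation terms by Cauchy--Young in the $\|\cdot\|_{-1}$/$\|\nabla_h\cdot\|_2$ duality, and closing with a discrete Gronwall argument. The only cosmetic difference is that you route the concave term through $\|\psi\|_2^2\le\|\nabla_h\psi\|_2\,\|\psi\|_{-1}$ rather than the paper's direct bound $\theta_0(\tilde{\phi}^n,\tilde{\phi}^{n+1})\le\theta_0\|\tilde{\phi}^n\|_{-1}\|\nabla_h\tilde{\phi}^{n+1}\|_2$, which yields the same absorption into the dissipation.
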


\begin{proof}
Taking an inner product with \eqref{error eq-1} by $(-\Delta_h)^{-1}\tilde{\phi}^{n+1}$ gives
\begin{align}
\frac{1}{2\dt}& ( \|\tilde{\phi}^{n+1}\|_{-1}^2-\|\tilde{\phi}^n\|_{-1}^2 )-\varepsilon^2 (\Delta_h \tilde{\phi}^{n+1} ,\tilde{\phi}^{n+1} ) \nonumber \\
& +\left(\ln(1+\hat{\Phi}_N^{n+1})-\ln(1+\phi^{n+1})-\ln(1-\hat{\Phi}_N^{n+1})+\ln(1-\phi^{n+1}),\tilde{\phi}^{n+1}\right) \nonumber \\
& \leq \theta_0 (\tilde{\phi}^n,\tilde{\phi}^{n+1} ) + (\tau^{n+1}, (-\Delta_h)^{-1}\tilde{\phi}^{n+1} ). \label{convergence 0.0}
\end{align}
Inequality \eqref{convergence 0.0} could be divided into three parts:
\begin{align}
I_1&=-\varepsilon^2 (\Delta_h \tilde{\phi}^{n+1} ,\tilde{\phi}^{n+1} ), \label{convergence 0.1} \\
I_2&=\left(\ln(1+\hat{\Phi}_N^{n+1})-\ln(1+\phi^{n+1})-\ln(1-\hat{\Phi}_N^{n+1})+\ln(1-\phi^{n+1}),\tilde{\phi}^{n+1}\right), \label{convergence 0.2} \\
I_3&=\theta_0 (\tilde{\phi}^n,\tilde{\phi}^{n+1} ) + (\tau^{n+1}, (-\Delta_h)^{-1}\tilde{\phi}^{n+1} ). \label{convergence 0.3}
\end{align}
An application of the Cauchy inequality implies that
\begin{align}
&\theta_0 ( \tilde{\phi}^n,\tilde{\phi}^{n+1} )\leq \theta_0\|\tilde{\phi}^n\|_{-1} 
\cdot \|\nabla_h\tilde{\phi}^{n+1}\|_2 \leq \frac{\varepsilon^2}{2}\|\nabla_h\tilde{\phi}^{n+1}\|_2^2 + \frac{\theta_0^2}{2\varepsilon^2}\|\tilde{\phi}^n\|_{-1}^2, \label{convergence 0.4} \\
& ( \tau^{n+1}, (-\Delta_h)^{-1}\tilde{\phi}^{n+1} ) \leq \|\tau^{n+1}\|_{-1} 
\cdot \|\tilde{\phi}^{n+1}\|_{-1} \leq \|\tau^{n+1}\|_{-1}^2 
+ \frac{1}{4}\| \tilde{\phi}^{n+1} \|_{-1}^2. \label{convergence 0.6}
\end{align}
Then we get  
\begin{align}
I_3 \leq \frac{\varepsilon^2}{2}\|\nabla_h\tilde{\phi}^{n+1}\|_2^2 + \frac{\theta_0^2}{2\varepsilon^2}\|\tilde{\phi}^n\|_{-1}^2 + \|\tau^{n+1}\|_{-1}^2 + \frac{1}{4}\| \tilde{\phi}^{n+1} \|_{-1}^2. \label{convergence 0.5}
\end{align}

Regarding the logarithmic terms, a combination of the convexity and the fact that $-1 < \hat{\Phi}_N, \phi^{n+1} < 1$ (at a point-wise level) leads to
\begin{align}
&\left(\ln(1+\hat{\Phi}_N^{n+1})-\ln(1+\phi^{n+1}),\tilde{\phi}^{n+1}\right) \geq 0, \label{convergence 1.1} \\
&-\left(\ln(1-\hat{\Phi}_N^{n+1})-\ln(1-\phi^{n+1}),\tilde{\phi}^{n+1}\right) \geq 0 \label{convergence 1.2} . 
\end{align}
This ensures that $I_2$ is non-negative.

For the diffusion term, the dynamical boundary condition makes the analysis more complicated: 
\begin{align}
I_1=\varepsilon^2 \|\nabla_h \tilde{\phi}^{n+1}\|_2^2-\varepsilon^2(\tilde{D}_{y}\tilde{\phi}^{n+1}_{\cdot,N},\tilde{\phi}^{n+1}_{\cdot,N})_\Gamma+\varepsilon^2(\tilde{D}_{y}\tilde{\phi}^{n+1}_{\cdot,0},\tilde{\phi}^{n+1}_{\cdot,0})_\Gamma. \label{convergence 2.1}
\end{align}
Taking an inner product with the bottom boundary condition \eqref{error eq-4} by $(-D_x^2)\tilde{\phi}^{B,n+1}$ yields 
\begin{align}
    \frac{1}{2\dt}& ( \|\tilde{\phi}^{B,n+1}\|_{-1,\Gamma}^2-\|\tilde{\phi}^{B,n}\|_{-1,\Gamma}^2 )+\varepsilon\kappa\| D_x\tilde{\phi}^{B,n+1} \|_{2,\Gamma}^2 \nonumber \\
& +\left(\ln(1+\hat{\Phi}_N^{B,n+1})-\ln(1+\phi^{B,n+1})-\ln(1-\hat{\Phi}_N^{B,n+1})+\ln(1-\phi^{B,n+1}),\tilde{\phi}^{B,n+1}\right)_\Gamma \nonumber \\
& \leq \theta_0 (\tilde{\phi}^{B,n},\tilde{\phi}^{B,n+1} )_\Gamma + (\tau_B^{n+1}, (-D_x^2)^{-1}\tilde{\phi}^{B,n+1} )_\Gamma + \varepsilon^2 (\tilde{D}_{y}\tilde{\phi}^{n+1}_{\cdot,0},\tilde{\phi}^{B,n+1} )_\Gamma, \label{convergence 2.2}
\end{align}
in which the one-dimensional the summation-by-part formula is applied, with periodic boundary condition.  Again, utilizing the positivity of the convex logarithmic inner product term and the standard H{\"o}lder inequality, we have
\begin{align}
    \frac{1}{2\dt}& ( \|\tilde{\phi}^{B,n+1}\|_{-1,\Gamma}^2-\|\tilde{\phi}^{B,n}\|_{-1,\Gamma}^2 )+\varepsilon\kappa\| D_x\tilde{\phi}^{B,n+1} \|_{2,\Gamma}^2 \nonumber \\
    & \leq \theta_0(\tilde{\phi}^{B,n},\tilde{\phi}^{B,n+1})_\Gamma+(\tau_B^{n+1}, (-D_x^2)^{-1}\tilde{\phi}^{B,n+1})_\Gamma + \varepsilon^2(\tilde{D}_{y}\tilde{\phi}^{n+1}_{\cdot,0},\tilde{\phi}^{B,n+1})_\Gamma  \nonumber \\
    & \leq \frac{\varepsilon\kappa}{2}\|D_x\tilde{\phi}^{B,n+1}\|_{2,\Gamma}^2+\frac{\theta_0^2}{2\varepsilon\kappa} \|\tilde{\phi}^{B,n} \|_{-1,\Gamma}^2 + \|\tau_B^{n+1}\|_{-1,\Gamma}^2   \nonumber \\
    &  \qquad\qquad\qquad\qquad\qquad + \frac{1}{4} \|\tilde{\phi}^{B,n+1}\|_{-1,\Gamma}^2 + \varepsilon^2(\tilde{D}_{y}\tilde{\phi}^{n+1}_{\cdot,0},\tilde{\phi}^{B,n+1})_\Gamma. \label{convergence 2.3}
\end{align}
Furthermore, we see that 
\begin{align}
    \frac{1}{2\dt}& ( \|\tilde{\phi}^{B,n+1}\|_{-1,\Gamma}^2-\|\tilde{\phi}^{B,n}\|_{-1,\Gamma}^2 )+\frac{\varepsilon\kappa}{2}\|D_x\tilde{\phi}^{B,n+1}\|_{2,\Gamma}^2 \nonumber \\
    & \leq \frac{\theta_0^2}{2\varepsilon\kappa} \|\tilde{\phi}^{B,n} \|_{-1,\Gamma}^2 + \frac{1}{4} \|\tilde{\phi}^{B,n+1}\|_{-1,\Gamma}^2 + \|\tau_B^{n+1}\|_{-1,\Gamma}^2 +\varepsilon^2(\tilde{D}_{y}\tilde{\phi}^{n+1}_{\cdot,0},\tilde{\phi}^{n+1}_{\cdot,0})_\Gamma. \label{convergence 2.4}
\end{align}
A similar inequality could be derived on the top boundary section: 
\begin{align}
    \frac{1}{2\dt}& ( \|\tilde{\phi}^{T,n+1}\|_{-1,\Gamma}^2-\|\tilde{\phi}^{T,n}\|_{-1,\Gamma}^2 ) +\frac{\varepsilon\kappa}{2}\|D_x\tilde{\phi}^{T,n+1}\|_{2,\Gamma}^2 \nonumber \\
    & \leq \frac{\theta_0^2}{2\varepsilon\kappa} \|\tilde{\phi}^{T,n} \|_{-1,\Gamma}^2  + \frac{1}{4} \|\tilde{\phi}^{T,n+1}\|_{-1,\Gamma}^2+ \|\tau_T^{n+1}\|_{-1,\Gamma}^2 -\varepsilon^2(\tilde{D}_{y}\tilde{\phi}^{n+1}_{\cdot,N},\tilde{\phi}^{n+1}_{\cdot,N})_\Gamma. \label{convergence 2.5}
\end{align}
Therefore, a combination of \eqref{convergence 2.1}, \eqref{convergence 2.4} and \eqref{convergence 2.5} results in 
\begin{align}
I_1 \geq & \frac{1}{2\dt}(\|\tilde{\phi}^{n+1}\|^2_{-1,\Gamma}-\|\tilde{\phi}^n\|^2_{-1,\Gamma})+\varepsilon^2\|\nabla_h \tilde{\phi}^{n+1}\|_2^2+\frac{\varepsilon\kappa}{2} \|D_x\tilde{\phi}^{n+1}\|_{2,\Gamma}^2 \nonumber \\
& -\frac{\theta_0^2}{2\varepsilon\kappa}\|\tilde{\phi}^n\|_{-1,\Gamma}^2-\frac{1}{4}\|\tilde{\phi}^{n+1}\|_{-1,\Gamma}^2-\|\tau^{n+1}_T\|_{-1,\Gamma}^2-\|\tau^{n+1}_B\|_{-1,\Gamma}^2 , 
\label{convergence 2.7}
\end{align}
where for simplicity, we have denoted 
\begin{align}
&\|\phi\|_{-1,\Gamma}^2:=\|\phi^T\|_{-1,\Gamma}^2+\|\phi^B\|_{-1,\Gamma}^2, \quad \forall \phi \in {\mathcal C}_{\rm per}^x \ \text{satisfying}\  \overline{\phi^T}^\Gamma=\overline{\phi^B}^\Gamma=0 ,  
  \label{convergence explain} \\
&\|D_x \phi\|_{2,\Gamma}^2:=\|D_x \phi^{B}\|_{2,\Gamma}^2 + \|D_x \phi^{T}\|_{2,\Gamma}^2, \quad \forall \phi \in {\mathcal C}_{\rm per}^x. 
\label{convergence explain 0}
\end{align}
Next, a summation of estimates for $I_1$, $I_2$ and $I_3$ gives 
\begin{align}
\frac{1}{2\dt}&\left(\|\tilde{\phi}^{n+1}\|_{-1}^2+\|\tilde{\phi}^{n+1}\|^2_{-1,\Gamma}-\|\tilde{\phi}^n\|_{-1}^2-\|\tilde{\phi}^n\|^2_{-1,\Gamma}\right)+\frac{\varepsilon^2}{2}\|\nabla_h \tilde{\phi}^{n+1}\|_2^2 + \frac{\varepsilon\kappa}{2} \|D_x\tilde{\phi}^{n+1}\|_{2,\Gamma}^2  \nonumber \\
& \leq \frac{\theta_0^2}{2\varepsilon^2}\|\tilde{\phi}^n\|_{-1}^2 + \frac{1}{4}\| \tilde{\phi}^{n+1} \|_{-1}^2+\frac{\theta_0^2}{2\varepsilon\kappa}\|\tilde{\phi}^n\|_{-1,\Gamma}^2+\frac{1}{4}\|\tilde{\phi}^{n+1}\|_{-1,\Gamma}^2 \nonumber \\
& \quad \quad + \|\tau^{n+1}\|_{-1}^2 +\|\tau^{n+1}_T\|_{-1,\Gamma}^2
 +\|\tau^{n+1}_B\|_{-1,\Gamma}^2 .  \label{convergence 3.1}
\end{align}
A further summation in time and reorganization reveals that 
\begin{align}
    \|\tilde{\phi}^{n+1}\|_{-1}^2 & +\|\tilde{\phi}^{n+1}\|^2_{-1,\Gamma} + \dt\sum_{k=1}^n \left( \varepsilon^2 \|\nabla_h \tilde{\phi}^{k}\|_2^2 + \varepsilon\kappa \|D_x\tilde{\phi}^{k}\|_{2,\Gamma}^2 \right) \nonumber \\
    &\leq 2\dt C_\alpha \sum_{k=1}^n \left( \|\tilde{\phi}^n\|_{-1}^2+\|\tilde{\phi}^n\|^2_{-1,\Gamma}\right) + \frac{\dt}{2} \left(\|\tilde{\phi}^{n+1}\|_{-1}^2+\|\tilde{\phi}^{n+1}\|^2_{-1,\Gamma} \right) \nonumber \\
    & \qquad\qquad + 2\dt \sum_{k=1}^{n} \left(\|\tau^k\|_{-1}^2+ \|\tau^k_T\|_{-1,\Gamma}^2 + \|\tau^k_B\|_{-1,\Gamma}^2\right) \nonumber \\
    & \leq 2\dt C_\alpha \sum_{k=1}^n \left( \|\tilde{\phi}^n\|_{-1}^2+\|\tilde{\phi}^n\|^2_{-1,\Gamma}\right) + \frac{1}{2} \left(\|\tilde{\phi}^{n+1}\|_{-1}^2+\|\tilde{\phi}^{n+1}\|^2_{-1,\Gamma} \right) \nonumber \\
    & \qquad\qquad + 2\dt \sum_{k=1}^{n} \left(\|\tau^k\|_{-1}^2+ \|\tau^k_T\|_{-1,\Gamma}^2 + \|\tau^k_B\|_{-1,\Gamma}^2\right) , \label{convergence 3.2}
\end{align}
and 
\begin{align}
    \|\tilde{\phi}^{n+1}\|_{-1}^2 & +\|\tilde{\phi}^{n+1}\|^2_{-1,\Gamma} +  2\dt\sum_{k=1}^n \left( \varepsilon^2 \|\nabla_h \tilde{\phi}^{k}\|_2^2 + \varepsilon\kappa \|D_x\tilde{\phi}^{k}\|_{2,\Gamma}^2 \right) \nonumber \\
   & \leq  4\dt C_\alpha \sum_{k=1}^n \Big( \|\tilde{\phi}^n\|_{-1}^2+\|\tilde{\phi}^n\|^2_{-1,\Gamma} \Big) + 4\dt \sum_{k=1}^{n} \Big(\|\tau^k\|_{-1}^2 +\|\tau^k_T\|_{-1,\Gamma}^2+\|\tau^k_B\|_{-1,\Gamma}^2 \Big) , \label{convergence 3.3}
\end{align}
where $C_\alpha = \max \{ \frac{\theta_0^2}{2\varepsilon^2}, \frac{\theta_0^2}{2\varepsilon\kappa} \}$ and the assumption $\dt < 1$ has been used. Meanwhile, the truncation error estimate is based on the fact that $\| \cdot \|_{-1} \leq C\| \cdot \|_2$: 
\begin{align}
&\|\tau^{n+1}\|_{-1} \leq C\|\tau^{n+1}\|_2 \leq C(\dt+h^2), \label{convergence 4.1} \\
&\|\tau^{n+1}_T\|_{-1,\Gamma} \leq C\|\tau^{n+1}_T\|_{2,\Gamma} \leq C(\dt+h^2), \label{convergence 4.2} \\
&\|\tau^{n+1}_B\|_{-1,\Gamma} \leq C\|\tau^{n+1}_B\|_{2,\Gamma} \leq C(\dt+h^2). \label{convergence 4.3} 
\end{align}
Therefore, with sufficiently small $\dt$ and $h$, an application of discrete Gronwall inequality leads to the desired convergence estimate
\begin{equation}
\|\tilde{\phi}^{n+1}\|_{-1}+\|\tilde{\phi}^{n+1}\|_{-1,\Gamma} + \dt\sum_{k=1}^n \left( \varepsilon^2 \|\nabla_h \tilde{\phi}^{k}\|_2^2 + \varepsilon\kappa \|D_x\tilde{\phi}^{k}\|_{2,\Gamma}^2 \right) \leq C_{\Omega,T}^*(\dt + h^2). \label{convergence 3.5} 
\end{equation}
This completes the proof.
\end{proof}

\section{The second order numerical scheme} \label{sec: 2nd numerical scheme}	
As mentioned in the previous section, since the auxiliary function has the same accuracy order as the spatial truncation error, the idea of this convergence analysis could be be extended to higher-order (in time) numerical schemes. In this section, we will propose a second order numerical scheme, based on a modified BDF2 temporal discretization. In this numerical approach, artificial stabilizers for both bulk and boundary are needed, and a modified energy dissipation could be derived. The numerical scheme is proposed as: given $\phi^{n}$, $\phi^{n-1} \in \Cperx$, find $\phi^{n+1}$, $\mu^{n+1} \in \Cperx$ such that
\begin{align}  
& \frac{3\phi^{n+1} - 4\phi^n + \phi^{n-1}}{2\dt} = \Delta_h \mu^{n+1}, \label{2nd-scheme-CHDBC-1}\\
& \mu^{n+1} = \ln ( 1+ \phi^{n+1} ) - \ln (1 - \phi^{n+1} ) - \theta_0 \hat{\phi}^{n+1}  - \varepsilon^2 \Delta_h \phi^{n+1} - A \dt \Delta_h (\phi^{n+1}-\phi^n), \label{2nd-scheme-CHDBC-2} \\
& \tilde{D}_{y} \mu^{n+1}_{i, 0} = \tilde{D}_{y} \mu^{n+1}_{i, N} = 0  ,  \quad 
  \phi^{n+1}_{i,0} = \phi^{B,n+1}_{i} , \, \, \, \phi^{n+1}_{i,N} = \phi^{T,n+1}_{i} , \label{2nd-scheme-CHDBC-3}	\\
& \frac{3\phi^{B,n+1} - 4\phi^{B,n} + \phi^{B,n-1}}{2\dt} = D_x^2 \mu^{n+1}_B, \, \, \, 
  \frac{3\phi^{T,n+1} - 4\phi^{T,n} + \phi^{T,n-1}}{2\dt} = D_x^2 \mu^{n+1}_T , \label{2nd-scheme-CHDBC-4}	\\
& \mu^{n+1}_B = \ln ( 1+ \phi^{B,n+1} ) - \ln (1 - \phi^{B,n+1} ) - \theta_0 \hat{\phi}^{B,n+1} - \varepsilon\kappa D_x^2 \phi^{B,n+1} - \varepsilon^2 \tilde{D}_{y} \phi^{n+1}_{\cdot, 0} \nonumber \\
& \quad\quad\quad - A \dt \tilde{D}_{y}(\phi^{n+1}-\phi^n)_{\cdot,0}-B \dt D_x^2(\phi^{B,n+1}-\phi^{B,n}),\label{2nd-scheme-CHDBC-5}\\
& \mu^{n+1}_T = \ln ( 1+ \phi^{T,n+1} ) - \ln (1 - \phi^{T,n+1} ) - \theta_0 \hat{\phi}^{T,n+1} - \varepsilon\kappa D_x^2 \phi^{T,n+1} + \varepsilon^2 \tilde{D}_{y} \phi^{n+1}_{\cdot, N} \nonumber \\
& \quad\quad\quad + A \dt \tilde{D}_{y}(\phi^{n+1}-\phi^n)_{\cdot,N}-B \dt D_x^2(\phi^{T,n+1}-\phi^{T,n}) , \label{2nd-scheme-CHDBC-6}  
\end{align}
where $\hat{\phi}^{n+1}=2\phi^n-\phi^{n-1}$, $\hat{\phi}^{T,n+1}=2\phi^{T,n}-\phi^{T,n-1}$, $\hat{\phi}^{B,n+1}=2\phi^{B,n}-\phi^{B,n-1}$. In fact, some constraints need to be imposed for the artificial regularization parameters $A$ and $B$ to ensure a discrete energy dissipation; see the details in Theorem \ref{2nd energy decay} below. The unique solvability and positivity-preserving analysis is established in the following theorem. Analogous to Theorem \ref{CHDBC-positivity}, the theoretical proof is based on the convexity and singularity analysis, and the technical details are skipped for the sake of brevity. 
\begin{theorem}  \label{2nd-CHDBC-positivity} 
Given $\phi^n,\ \phi^{n-1} \in \Cperx$, with $-1 < \phi^n_{i,j},\ \phi^{n-1}_{i,j} < 1$, $0 \le i, j \le N$,  and $\overline{\phi^n} =\overline{\phi^{n-1}} = \beta_0$, $\overline{\phi^{B,n}}^\Gamma=\overline{\phi^{B,n-1}}^\Gamma = \beta_{B,0}$, $\overline{\phi^{T,n}}^\Gamma=\overline{\phi^{T,n-1}}^\Gamma = \beta_{T,0}$, there exists a unique solution $\phi^{n+1} \in \Cperx$ to the numerical system \eqref{2nd-scheme-CHDBC-1}-\eqref{2nd-scheme-CHDBC-6}, with $-1 < \phi^{n+1}_{i,j} < 1$, $0 \le i, j \le N,$ and $\overline{\phi^{n+1}} = \beta_0$,  $\overline{\phi^{B,n+1}}^\Gamma  = \beta_{B,0}$, $\overline{\phi^{T,n+1}}^\Gamma  = \beta_{T,0}$. In particular, the solution is the unique minimizer and stationary point of the functional $\mJ_h^n$ given by
\begin{align}
    \mJ_h^n (\phi) & :=  \frac{1}{12\dt} \| 3\phi - 4\phi^n + \phi^{n-1} \|_{-1}^2 + (I(\phi),1) + \frac{2}{h}  (I(\phi^B),1)_\Gamma  + \frac{2}{h} (I(\phi^T),1)_\Gamma \nonumber \\
& \quad + \frac{1}{6h\dt } \| 3\phi^B - 4\phi^{B,n} + \phi^{B,n-1} \|_{-1, \Gamma}^2 + \frac{1}{6h\dt} \| 3\phi^T - 4\phi^{T,n} + \phi^{T,n-1} \|_{-1,\Gamma}^2    
	 \nonumber\\
& \quad  + \frac{\varepsilon^2}{2}(\phi, L_h\phi) + \frac{A\dt}{2}\left(\phi-\phi^n, L_h(\phi-\phi^n) \right) - \frac{\varepsilon\kappa}{h}(\phi^B, D_x^2\phi^B)_\Gamma - \frac{\varepsilon\kappa}{h} (\phi^T, D_x^2\phi^T)_\Gamma \nonumber \\
& \quad - \theta_0 \Big( (\hat{\phi}^{n+1},\phi) + \frac{2}{h} (\hat{\phi}^{B,n+1},\phi^B)_\Gamma + \frac{2}{h} (\hat{\phi}^{T,n+1},\phi^T)_\Gamma \Big) \nonumber \\
& \quad +\frac{B\dt}{h}\|D_x(\phi^{B}-\phi^{B,n})\|_{2,\Gamma}^2 +\frac{B\dt}{h} \|D_x(\phi^{T}-\phi^{T,n})\|_{2,\Gamma}^2,  \nonumber
\end{align}
with $I(\phi)$ is given in \eqref{function I}, over the admissible set
\begin{equation}
A_h :=  \left\{  \phi \in \Cperx \ \middle| \ -1 < \phi_{i,j}  < 1 , \ 0 \le j \le N , \ i\in\mathbb{Z}, \ \overline{\phi} = \beta_0, \ \overline{\phi_B}  = \beta_{B,0}, \ \overline{\phi_T}  = \beta_{T,0} \right\}. \nonumber
\end{equation} 
In other words, we have $\phi^{n+1} = \mathop{\mathrm{argmin}} \mJ_h^n(\phi)$. 
\end{theorem}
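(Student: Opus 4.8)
The plan is to mirror the convex-splitting argument underlying Theorem \ref{CHDBC-positivity}: establish that $\mJ_h^n$ is strictly convex on the convex admissible set $A_h$ and continuous up to its compact closure, deduce a unique interior minimizer through a singularity (barrier) argument, and verify that the resulting Euler--Lagrange system reproduces \eqref{2nd-scheme-CHDBC-1}--\eqref{2nd-scheme-CHDBC-6}.

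First I would check well-posedness and strict convexity of $\mJ_h^n$ on $A_h$. On $A_h$ all three mean constraints hold, so $\overline{3\phi-4\phi^n+\phi^{n-1}}=0$ and likewise on each boundary section; hence the $\|\cdot\|_{-1}$ and $\|\cdot\|_{-1,\Gamma}$ terms are well defined on the mean-zero subspaces, where $L_h$ and $-D_x^2$ are invertible, and these quadratic terms are convex since $L_h^{-1}$ and $(-D_x^2)^{-1}$ are positive definite there. The surface-energy contributions $\frac{\varepsilon^2}{2}(\phi,L_h\phi)=\frac{\varepsilon^2}{2}\|\nabla_h\phi\|_2^2$ and $-\frac{\varepsilon\kappa}{h}(\phi^B,D_x^2\phi^B)_\Gamma=\frac{\varepsilon\kappa}{h}\|D_x\phi^B\|_{2,\Gamma}^2$ (and its top analogue), together with the BDF2 stabilizers $\frac{A\dt}{2}(\phi-\phi^n,L_h(\phi-\phi^n))$ and $\frac{B\dt}{h}\|D_x(\phi^B-\phi^{B,n})\|_{2,\Gamma}^2$, are convex for $A,B\ge 0$; the $\theta_0$ terms are affine in $\phi$ because $\hat\phi^{n+1}=2\phi^n-\phi^{n-1}$ is known data; and each entropic term $I(\phi_{i,j})$ is strictly convex since $I''(x)=\frac{1}{1+x}+\frac{1}{1-x}>0$ on $(-1,1)$. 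As every grid value carries a strictly convex $I$-contribution with positive weight, $\mJ_h^n$ is strictly convex on the affine set $A_h$, which forces uniqueness of any minimizer.

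Second, existence and the crucial positivity. Since $I$ extends continuously to $\pm 1$ with $I(\pm 1)=2\ln 2$, the functional $\mJ_h^n$ extends continuously to the compact closure $\overline{A_h}$ and attains a minimum $\phi^*$. The key step, and the main obstacle, is to rule out $\phi^*$ touching $\pm 1$. Following the singularity analysis of Theorem \ref{CHDBC-positivity}, suppose the smallest grid value equals $-1$; I would construct an admissible variation that raises $\phi^*$ at that node while compensating at the nodes of largest value so as to preserve the bulk and both surface mean constraints simultaneously, and show that the directional derivative of $\mJ_h^n$ tends to $-\infty$. This succeeds because $I'(x)=\ln\frac{1+x}{1-x}\to-\infty$ as $x\to-1^+$ (and $\to+\infty$ as $x\to 1^-$), a slope that dominates the bounded contributions of the quadratic and affine terms; the same argument applies at a node where $\phi^*=1$. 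The genuinely new bookkeeping relative to the first-order case is that the compensating perturbation must respect all three constraints at once while the singular node may lie on $\Gamma$; I expect this to be handled exactly as in \cite{CH-DBC-2024}, with the $\frac{2}{h}$-weighted boundary $I$-terms only reinforcing the blow-up. Hence $\phi^*$ lies in the interior, which yields $-1<\phi^{n+1}_{i,j}<1$, and strict convexity makes it the unique minimizer.

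Finally, I would identify the stationary point with the scheme. Setting the variational derivative of $\mJ_h^n$ to zero along mean-zero directions, the term $\frac{1}{12\dt}\|3\phi-4\phi^n+\phi^{n-1}\|_{-1}^2$ produces $\frac{1}{2\dt}L_h^{-1}(3\phi-4\phi^n+\phi^{n-1})$, which upon applying $L_h$ reproduces the BDF2 time-difference balanced against $\Delta_h\mu^{n+1}$ in \eqref{2nd-scheme-CHDBC-1}; the $I'$, $\theta_0$, $\varepsilon^2 L_h$ and $A\dt\,L_h(\cdot-\phi^n)$ terms assemble the bulk chemical potential \eqref{2nd-scheme-CHDBC-2}; and the surface functional derivatives, together with the $\frac{2}{h}$ coefficients arising from the $L_h$-versus-$(-\Delta_h)$ discrepancy on $\Gamma$ (exactly as recorded after Theorem \ref{CHDBC-positivity}), yield \eqref{2nd-scheme-CHDBC-4}--\eqref{2nd-scheme-CHDBC-6} with the normal-derivative coupling $\varepsilon^2\tilde{D}_y\phi^{n+1}$ and the $B\dt$ surface stabilizer. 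Because $\phi^*\in A_h$, the mean constraints give the asserted mass conservation; conversely, summation-by-parts applied to \eqref{2nd-scheme-CHDBC-1} and \eqref{2nd-scheme-CHDBC-4} shows that any scheme solution preserves the three means, hence lies in $A_h$ and coincides with $\phi^*$. This delivers existence, uniqueness, positivity-preservation, and the variational characterization at once.
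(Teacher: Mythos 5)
Your proposal follows exactly the route the paper intends: the paper states that the proof is ``based on the convexity and singularity analysis'' analogous to Theorem \ref{CHDBC-positivity} and omits the details, and your outline --- strict convexity of $\mJ_h^n$ from the pointwise entropy terms, existence on the compact closure $\overline{A_h}$, the barrier argument using the blow-up of $I'$ near $\pm 1$ under mean-preserving variations, and the identification of the Euler--Lagrange system with \eqref{2nd-scheme-CHDBC-1}--\eqref{2nd-scheme-CHDBC-6} via the $2/h$ boundary weights --- is precisely that argument, correctly adapted to the BDF2 stabilizers. No gaps; this is the same approach, just written out.
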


%

In the following theorem, the energy stability is derived in terms of a modified energy functional with a few artificial stabilization terms.
\begin{theorem} \label{2nd energy decay}
Introduce a stabilized discrete energy
\begin{align}
\tilde{E}_h(\phi^{n+1},\phi^n)=E_h(\phi^{n+1})& +\frac{1}{4\dt}\| \phi^{n+1}-\phi^n \|_{-1}^2+\frac{\theta_0}{2}\|\phi^{n+1}-\phi^n\|_2^2 \nonumber \\
& +\frac{1}{4\dt}\| \phi^{T,n+1}-\phi^{T,n} \|_{-1,\Gamma}^2+\frac{\theta_0}{2}\|\phi^{T,n+1}-\phi^{T,n}\|_{2,\Gamma}^2 \nonumber \\
& +\frac{1}{4\dt}\| \phi^{B,n+1}-\phi^{B,n} \|_{-1,\Gamma}^2+\frac{\theta_0}{2}\|\phi^{B,n+1}-\phi^{B,n}\|_{2,\Gamma}^2 , 
\end{align}
where $E_h(\phi)$ is given by \eqref{CHDBC-discrete energy}. Under a stabilizer parameter constraint, $A,B \leq \frac{\theta_0^2}{16}$, for any time step size $\dt>0$, the numerical solution of \eqref{2nd-scheme-CHDBC-1}-\eqref{2nd-scheme-CHDBC-6} preserves a modified energy dissipation law
\begin{equation}
    \tilde{E}_h(\phi^{n+1},\phi^n) \leq \tilde{E}_h(\phi^{n},\phi^{n-1}).  \label{modified energy dissipation}
\end{equation}
\end{theorem}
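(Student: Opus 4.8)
The plan is to replicate the single-step energy argument behind Theorem \ref{CHDBC-energy}, upgraded to the three-level BDF2 stencil, and to let the extra artificial stabilizers absorb the terms that the one-step argument cannot control. I would first pair the bulk update \eqref{2nd-scheme-CHDBC-1} with $\mu^{n+1}$ in $(\cdot,\cdot)$, and the two surface updates \eqref{2nd-scheme-CHDBC-4} with $\mu_B^{n+1}$ and $\mu_T^{n+1}$ in $(\cdot,\cdot)_\Gamma$ (carrying the $2/h$ weighting that reconciles $L_h$ with $-\Delta_h$), then sum the three identities. Summation by parts turns the right-hand sides into $-\|\nabla_h\mu^{n+1}\|_2^2-\|D_x\mu_B^{n+1}\|_{2,\Gamma}^2-\|D_x\mu_T^{n+1}\|_{2,\Gamma}^2\le 0$, and the crucial point is that the coupling terms $\pm\varepsilon^2\tilde{D}_y\phi^{n+1}$ together with the \emph{new} stabilizer cross-terms $\pm A\dt\,\tilde{D}_y(\phi^{n+1}-\phi^n)$ appearing in \eqref{2nd-scheme-CHDBC-5}--\eqref{2nd-scheme-CHDBC-6} cancel between bulk and boundary exactly as in \eqref{energy}. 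These cross-terms are designed precisely so that the bulk/surface coupling still closes in the presence of the $A$-stabilizer.

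Next I would substitute the chemical-potential definitions \eqref{2nd-scheme-CHDBC-2}, \eqref{2nd-scheme-CHDBC-5}, \eqref{2nd-scheme-CHDBC-6} and split the BDF2 difference as a backward-Euler part plus a correction, $\tfrac{3\phi^{n+1}-4\phi^n+\phi^{n-1}}{2\dt}=\tfrac{\delta^{n+1}}{\dt}+\tfrac{\delta^{n+1}-\delta^n}{2\dt}$ with $\delta^k:=\phi^k-\phi^{k-1}$. The backward-Euler part, paired with $\mu^{n+1}$, reproduces the first-order machinery verbatim: the logarithmic contributions $(I'(\phi^{n+1}),\delta^{n+1})$ and their surface analogs are nonnegative by convexity of $I$ from \eqref{function I}, while the quadratic gradient term $-\varepsilon^2(\cdot,\Delta_h\phi^{n+1})$, the surface-diffusion terms $-\varepsilon\kappa(\cdot,D_x^2\phi^{B,n+1})$ and its top analog, and the $-\tfrac{\theta_0}{2}\|\cdot\|^2$ pieces telescope into $E_h(\phi^{n+1})-E_h(\phi^n)$ (see \eqref{CHDBC-discrete energy}) plus manifestly nonnegative one-step remainders. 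The correction part $\tfrac{\delta^{n+1}-\delta^n}{2\dt}$ paired against the time derivative in the discrete $H^{-1}$ metric, controlled by a Young inequality, is what generates the increment correction $\tfrac{1}{4\dt}\|\phi^{n+1}-\phi^n\|_{-1}^2$ (and its surface counterparts) that distinguish $\tilde{E}_h$ from $E_h$.

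The main obstacle is the explicitly extrapolated concave term $-\theta_0(3\phi^{n+1}-4\phi^n+\phi^{n-1},\hat{\phi}^{n+1})$ with $\hat{\phi}^{n+1}=2\phi^n-\phi^{n-1}$. Writing $\hat{\phi}^{n+1}=\phi^{n+1}-(\delta^{n+1}-\delta^n)$ separates a genuine concave contribution, which supplies the $\tfrac{\theta_0}{2}\|\phi^{n+1}-\phi^n\|_2^2$ correction in $\tilde{E}_h$, from an \emph{extrapolation error} proportional to $\theta_0(3\delta^{n+1}-\delta^n,\delta^{n+1}-\delta^n)$ (and the corresponding surface terms). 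This error does not telescope; it must be dominated by the increment dissipation produced by the stabilizers $A$ and $B$ and by the modified-energy corrections, via Young's inequality. It is exactly this balance that forces the constraint $A,B\le\tfrac{\theta_0^2}{16}$: the stabilizer coefficients must be small enough that the leftover quadratic form in $(\delta^{n+1},\delta^n)$ stays nonnegative after the $\tfrac{\theta_0}{2}\|\delta^{n+1}\|_2^2$ and $\tfrac{B\dt}{h}\|D_x\delta\|_{2,\Gamma}^2$-type terms have been peeled off. Tracking these boundary increment terms so that the bulk and surface contributions line up is the most delicate bookkeeping in the proof.

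Finally, I would collect the telescoped quantities into $\tilde{E}_h(\phi^{n+1},\phi^n)-\tilde{E}_h(\phi^n,\phi^{n-1})$ and move the nonnegative pieces — the convex logarithmic inner products, the one-step and BDF2 remainders, and the constraint-controlled combination of extrapolation error and stabilizer dissipation — together with the physical dissipation $\|\nabla_h\mu^{n+1}\|_2^2+\|D_x\mu_B^{n+1}\|_{2,\Gamma}^2+\|D_x\mu_T^{n+1}\|_{2,\Gamma}^2$ to the right-hand side. Since every such term is nonnegative under $A,B\le\tfrac{\theta_0^2}{16}$, the inequality \eqref{modified energy dissipation} follows for arbitrary $\dt>0$, with no restriction on the mesh size. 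I do not expect the unconditional (in $\dt$) nature to cause trouble, precisely because the $H^{-1}$ increment correction and the $\theta_0$-weighted $L^2$ correction are scaled to match the two parts of the split BDF2 difference.
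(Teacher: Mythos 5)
Your overall strategy matches the paper's: test the bulk equation against the BDF2-type increment to produce the $\tfrac{5}{4\dt}\|\phi^{n+1}-\phi^n\|_{-1}^2-\tfrac{1}{4\dt}\|\phi^{n}-\phi^{n-1}\|_{-1}^2$ telescoping, use convexity of $I$ for the logarithmic part, let the $\pm\varepsilon^2\tilde D_y$ and $\pm A\dt\,\tilde D_y$ cross-terms cancel between bulk and the two surface identities, and absorb the extrapolated concave term via Young's inequality into the $\tfrac{\theta_0}{2}\|\cdot\|_2^2$ corrections of $\tilde E_h$. (The paper tests against $(-\Delta_h)^{-1}(\phi^{n+1}-\phi^n)$ rather than $\mu^{n+1}$, which is a cosmetic difference.)

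There is, however, one genuine error in your argument: the direction of the stabilizer constraint. You assert that $A,B$ must be \emph{small enough} ($A,B\le\tfrac{\theta_0^2}{16}$) "so that the leftover quadratic form stays nonnegative." This is backwards, and the mechanism you describe cannot work: an artificial stabilizer is added precisely to \emph{dominate} the expansive concave/extrapolation error, so shrinking it can only hurt. Concretely, after the concave term is bounded as in the paper's \eqref{2nd-energy-4}, the new piece $\tfrac{\theta_0}{2}\|\phi^{n+1}-\phi^n\|_2^2$ required in $\tilde E_h(\phi^{n+1},\phi^n)$ must be generated from the surplus dissipation via
\begin{equation*}
\frac{1}{\dt}\|\phi^{n+1}-\phi^{n}\|_{-1}^2+A\dt\|\nabla_h(\phi^{n+1}-\phi^{n})\|_2^2 \;\ge\; 2\sqrt{A}\,\|\phi^{n+1}-\phi^{n}\|_2^2 ,
\end{equation*}
and $2\sqrt{A}\ge\tfrac{\theta_0}{2}$ forces $A\ge\tfrac{\theta_0^2}{16}$ (similarly for $B$ on the boundary). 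With $A=B=0$ the estimate fails for large $\dt$, so the unconditional-in-$\dt$ claim genuinely needs the \emph{lower} bound. To be fair, the theorem statement itself writes "$A,B\le\tfrac{\theta_0^2}{16}$," but the paper's own proof concludes with "$A,B\ge\tfrac{\theta_0^2}{16}$"; the statement contains a typo, and your blind reconstruction inherited it and then supplied an incorrect rationale for it. Correcting the inequality direction (and dropping the "small enough" justification) repairs your proof; the rest of the bookkeeping you describe is consistent with the paper's argument.
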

\begin{proof}
By taking an inner product with \eqref{2nd-scheme-CHDBC-1} by $(-\Delta_h)^{-1}(\phi^{n+1}-\phi^n)$, an application of the summation-by-part formula gives 
\begin{align}
    (\frac{3\phi^{n+1}-4\phi^n+\phi^{n-1}}{2\dt}, (-\Delta_h)^{-1}(\phi^{n+1}-\phi^n)) + (\mu^{n+1},\phi^{n+1}-\phi^n) = 0. \label{2nd-energy-1}
\end{align}
The first term could be analyzed as follows: 
\begin{equation} 
\begin{aligned} 
	I:= & \Big( \frac{3\phi^{n+1}-4\phi^n+\phi^{n-1}}{2\dt}, (-\Delta_h)^{-1}(\phi^{n+1}-\phi^n) \Big) 
\\
  \ge & 
  \frac{5}{4\dt}\|\phi^{n+1}-\phi^n\|_{-1}^2-\frac{1}{4\dt}\|\phi^{n}-\phi^{n-1}\|_{-1}^2. 
\end{aligned} 
  \label{2nd-energy-2}
\end{equation}
Regarding the second term $II:=(\mu^{n+1},\phi^{n+1}-\phi^n)$, the convexity analysis implies that
\begin{align}
II &\geq (I(\phi^{n+1}),1)-(I(\phi^n),1)-\theta_0(2\phi^n-\phi^{n-1},\phi^{n+1}-\phi^n)-\varepsilon^2(\Delta_h\phi^{n+1},\phi^{n+1}-\phi^n) \nonumber \\
& \quad -A\dt(\Delta_h(\phi^{n+1}-\phi^n),\phi^{n+1}-\phi^n). \label{2nd-energy-3}
\end{align}
The following inequalities are valid: 
\begin{align}
-\theta_0(2\phi^n-\phi^{n-1},\phi^{n+1}-\phi^n) & \geq -\frac{\theta_0}{2}\|\phi^n-\phi^{n-1}\|_2^2-\frac{\theta_0}{2}(\|\phi^{n+1}\|_2^2-\|\phi^n\|_2^2), \label{2nd-energy-4}  \\
 -A\dt(\Delta_h(\phi^{n+1}-\phi^n),\phi^{n+1}-\phi^n)&=A\dt\|\nabla_h(\phi^{n+1}-\phi^n)\|_2^2 \nonumber \\
& \quad -A \dt (\tilde{D}_{y}(\phi^{n+1}-\phi^n)_{\cdot,N},\phi^{T,n+1}-\phi^{T,n})_\Gamma \nonumber \\
& \quad +A \dt (\tilde{D}_{y}(\phi^{n+1}-\phi^n)_{\cdot,0},\phi^{B,n+1}-\phi^{B,n})_\Gamma \label{2nd-energy-5}  \\
-\varepsilon^2(\Delta_h\phi^{n+1},\phi^{n+1}-\phi^n) & \geq \frac{\varepsilon^2}{2}(\|\nabla_h\phi^{n+1}\|_2^2-\|\nabla_h\phi^n\|_2^2)-\varepsilon^2(\tilde{D}_{y}\phi^{n+1}_{\cdot,N},\phi^{T,n+1}-\phi^{T,n})_\Gamma \nonumber \\
& \quad +\varepsilon^2(\tilde{D}_{y}\phi^{n+1}_{\cdot,0},\phi^{B,n+1}-\phi^{B,n})_\Gamma.\label{2nd-energy-6} 
\end{align}
The boundary inner product could be covered by taking an inner product with \eqref{2nd-scheme-CHDBC-4} by $(-D_x^2)^{-1}(\phi^{T,n+1}-\phi^{T,n})$. The following quantity is introduced: 
$$I_T=(\frac{3\phi^{T,n+1} - 4\phi^{T,n} + \phi^{T,n-1}}{2\dt},(-D_x^2)^{-1}(\phi^{T,n+1}-\phi^{T,n}))_\Gamma,$$ 
$$II_T=(\mu^{n+1}_T,\phi^{T,n+1}-\phi^{T,n})_\Gamma.$$ 
Similar estimates could be derived on the boundary section:
\begin{align}
&I_T \geq \frac{5}{4\dt}\|\phi^{T,n+1}-\phi^{T,n}\|_{-1,\Gamma}^2-\frac{1}{4\dt}\|\phi^{T,n}-\phi^{T,n-1}\|_{2,\Gamma}^2, \label{2nd-energy-b1} \\
&II_T \geq (I(\phi^{T,n+1}),1)-(I(\phi^{T,n}),1)-\frac{\theta_0}{2}\| \phi^{T,n}-\phi^{T,n-1} \|_{2,\Gamma}^2 -\frac{\theta_0}{2}(\|\phi^{T,n+1}\|_2^2-\|\phi^{T,n}\|_2^2) \nonumber \\
& \quad\quad +\frac{\varepsilon\kappa}{2}\|D_x\phi^{T,n+1}\|_{2,\Gamma}^2 -\frac{\varepsilon\kappa}{2}\|D_x\phi^{T,n}\|_{2,\Gamma}^2 + B\dt \|D_x(\phi^{T,n+1}-\phi^{T,n})\|_{2,\Gamma}^2 \nonumber \\
& \quad\quad +A \dt (\tilde{D}_{y}(\phi^{n+1}-\phi^n)_{\cdot,N},\phi^{T,n+1}-\phi^{T,n}) + \varepsilon^2(\tilde{D}_{y}\phi^{n+1}_{\cdot,N},\phi^{T,n+1}-\phi^{T,n})_\Gamma,  \label{2nd-energy-b2}
\end{align}
and
\begin{align}
&I_B \geq \frac{5}{4\dt}\|\phi^{B,n+1}-\phi^{B,n}\|_{-1,\Gamma}^2-\frac{1}{4\dt}\|\phi^{B,n}-\phi^{B,n-1}\|_{2,\Gamma}^2, \label{2nd-energy-b3} \\
&II_B \geq (I(\phi^{B,n+1}),1)-(I(\phi^{B,n}),1)-\frac{\theta_0}{2}\| \phi^{B,n}-\phi^{B,n-1} \|_{2,\Gamma}^2 -\frac{\theta_0}{2}(\|\phi^{B,n+1}\|_2^2-\|\phi^{B,n}\|_2^2) \nonumber \\
& \quad\quad +\frac{\varepsilon\kappa}{2}\|D_x\phi^{B,n+1}\|_{2,\Gamma}^2 -\frac{\varepsilon\kappa}{2}\|D_x\phi^{B,n}\|_{2,\Gamma}^2 + B\dt \|D_x(\phi^{B,n+1}-\phi^{B,n})\|_{2,\Gamma}^2 \nonumber \\
& \quad\quad -A \dt (\tilde{D}_{y}(\phi^{n+1}-\phi^n)_{\cdot,0},\phi^{B,n+1}-\phi^{B,n}) -\varepsilon^2(\tilde{D}_{y}\phi^{n+1}_{\cdot,0},\phi^{B,n+1}-\phi^{B,n})_\Gamma. \label{2nd-energy-b4}
\end{align}
A combination of the bulk and boundary inequalities leads to 
\begin{align}
E_h&(\phi^{n+1})-E_h(\phi^{n})-\frac{\theta_0}{2}\left( \|\phi^{n}-\phi^{n-1}\|_2^2+\|\phi^{T,n}-\phi^{T,n-1} \|_{2,\Gamma}^2+\|\phi^{B,n}-\phi^{B,n-1} \|_{2,\Gamma}^2 \right) \nonumber \\
&+A\dt\|\nabla_h(\phi^{n+1}-\phi^n)\|_2^2+B\dt \|D_x(\phi^{T,n+1}-\phi^{T,n})\|_{2,\Gamma}^2+B\dt \|D_x(\phi^{B,n+1}-\phi^{B,n})\|_{2,\Gamma}^2   \nonumber \\
&+\frac{5}{4\dt}\|\phi^{B,n+1}-\phi^{B,n}\|_{-1,\Gamma}^2-\frac{1}{4\dt}\|\phi^{B,n}-\phi^{B,n-1}\|_{2,\Gamma}^2 \nonumber \\
&+\frac{5}{4\dt}\|\phi^{T,n+1}-\phi^{T,n}\|_{-1,\Gamma}^2-\frac{1}{4\dt}\|\phi^{T,n}-\phi^{T,n-1}\|_{2,\Gamma}^2 \nonumber \\
&+\frac{5}{4\dt}\|\phi^{n+1}-\phi^{n}\|_{-1}^2-\frac{1}{4\dt}\|\phi^{n}-\phi^{n-1}\|_{2}^2 \leq 0.\label{2nd-energy-b5}
\end{align}
In turn, an application of Cauchy inequality yields the following estimates:
\begin{align}
    \frac{1}{\dt}\|\phi^{n+1}-\phi^{n}\|_{-1}^2+A\dt\|\nabla_h(\phi^{n+1}-\phi^{n})\|_2^2 &\geq 2\sqrt{A} \|\phi^{n+1}-\phi^{n}\|_2^2, \nonumber \\
   \frac{1}{\dt}\|\phi^{T,n+1}-\phi^{T,n}\|_{-1,\Gamma}^2+B\dt\|D_x(\phi^{T,n+1}-\phi^{T,n})\|_{2,\Gamma}^2 & \geq 2\sqrt{B} \|\phi^{T,n+1}-\phi^{T,n}\|_{2,\Gamma}^2, \label{2nd-energy-b6} \\
    \frac{1}{\dt}\|\phi^{B,n+1}-\phi^{B,n}\|_{-1,\Gamma}^2+B\dt\|D_x(\phi^{B,n+1}-\phi^{B,n})\|_{2,\Gamma}^2 &\geq 2\sqrt{B} \|\phi^{B,n+1}-\phi^{B,n}\|_{2,\Gamma}^2.\nonumber
\end{align}
Therefore, the desired inequality $\tilde{E}_h(\phi^{n+1},\phi^n) \leq \tilde{E}_h(\phi^{n},\phi^{n-1})$ is obtained, if $A,B\geq \frac{\theta_0^2}{16}$. 
\end{proof}
The same correction function defined in \eqref{sine auxiliary function 1}, as well as the Fourier projection in the $x$-direction, could be used in the convergence analysis. As mentioned in Remark \ref{order remark}, the associated auxiliary function would have a second order accuracy in space. The regularity requirement should be higher than class $\mathcal{R}$ in the first order scheme. Of course, the exact solution $\Phi$ is always assumed to have sufficient regularity. With the corrected solution $\hat{\Phi}_N=\mathcal{P}_x^N\Phi-\delta\Phi$, which contains the Fourier projection in the $x$-direction and the correction function, we define $\tilde{\phi}=\hat{\Phi}_N-\phi$ and $\tilde{\hat{\phi}}^{n+1}=2\tilde{\phi}^n-\tilde{\phi}^{n-1}$. The convergence result is stated in the following theorem. The proof will be brief, since the details are very similar to  the ones presented in Theorem \ref{Theorem 1}.
\begin{theorem}\label{2nd convergernce theorem}
Given smooth initial data $\Phi(\cdot,t=0)$, suppose the exact solution for CH equation with dynamical boundary condition is of sufficient regularity. Provided that $\dt$ and $h$ are sufficiently small, we have
\begin{equation}
\|\tilde{\phi}^{n+1}\|_{-1}+\|\tilde{\phi}^{n+1}\|_{-1,\Gamma} + \dt \sum_{k=1}^n \left( \varepsilon^2 \|\nabla_h \tilde{\phi}^{k}\|_2^2 + \varepsilon\kappa\|D_x \tilde{\phi}^{k}\|_{2,\Gamma}^2 \right) \leq C(\dt^2 + h^2) , 
  \label{2nd convergence theorem} 
\end{equation}
where $\|\tilde{\phi}^{n+1}\|_{-1,\Gamma}$ and $\|D_x \tilde{\phi}^{n+1}\|_{2,\Gamma}$ is given by \eqref{simplicity -1 norm} and \eqref{simplicity -1 norm 0}, and the positive constant $C$ is independent of $\dt$ and $h$.  
\end{theorem}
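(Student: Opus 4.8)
The plan is to follow the architecture of the proof of Theorem \ref{Theorem 1} almost verbatim, replacing the backward-Euler time-difference structure with the corresponding second-order (BDF2) machinery. First I would reuse the same corrected exact profile $\hat{\Phi}_N = \mathcal{P}_x^N \Phi - \delta\Phi$, with the cosine auxiliary function $\delta\Phi$ of \eqref{sine auxiliary function 1}; by Lemma \ref{sine function property} this preserves the bulk and surface discrete mass conservation, so that $\overline{\tilde{\phi}^n} = \overline{\tilde{\phi}^{B,n}}^\Gamma = \overline{\tilde{\phi}^{T,n}}^\Gamma = 0$ and both $\|\cdot\|_{-1}$ and $\|\cdot\|_{-1,\Gamma}$ remain well defined for the error function $\tilde{\phi}^n = \hat{\Phi}_N^n - \phi^n$. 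Substituting $\hat{\Phi}_N$ into the scheme \eqref{2nd-scheme-CHDBC-1}--\eqref{2nd-scheme-CHDBC-6} produces a consistency system of the same form as \eqref{error eq-1}--\eqref{error eq-6}, but carrying the BDF2 time difference and the extra stabilizer terms. The associated truncation error is now $O(\dt^2 + h^2)$: the BDF2 stencil is second order in time, the stabilizer contributions $A\dt\Delta_h(\hat{\Phi}_N^{n+1}-\hat{\Phi}_N^n)$ and $B\dt D_x^2(\cdots)$ are $O(\dt^2)$ since $\hat{\Phi}_N^{n+1}-\hat{\Phi}_N^n = O(\dt)$, and the spatial truncation together with the logarithmic Taylor remainder in $\delta\Phi$ stays $O(h^2)$, exactly as in \eqref{order estimate} (the separation property $1\pm\hat{\Phi}_N \ge \tfrac12\epsilon_0$ being re-derived under the higher regularity demanded by the second-order expansion).

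The key energy estimate again proceeds by pairing the bulk error equation with $(-\Delta_h)^{-1}\tilde{\phi}^{n+1}$ and the two boundary error equations with $(-D_x^2)^{-1}\tilde{\phi}^{B,n+1}$ and $(-D_x^2)^{-1}\tilde{\phi}^{T,n+1}$. The single new structural ingredient is the Dahlquist/G-stability identity for the BDF2 difference in the $H_h^{-1}$ inner product,
\begin{equation}
\left( 3a - 4b + c,\, (-\Delta_h)^{-1}a \right) = \frac{1}{2}\Big( \|a\|_{-1}^2 - \|b\|_{-1}^2 + \|2a - b\|_{-1}^2 - \|2b - c\|_{-1}^2 \Big) + \frac{1}{2}\|a - 2b + c\|_{-1}^2, \nonumber
\end{equation}
which furnishes a telescoping quantity $\|\tilde{\phi}^{n+1}\|_{-1}^2 + \|2\tilde{\phi}^{n+1}-\tilde{\phi}^n\|_{-1}^2$ in place of the simple difference $\|\tilde{\phi}^{n+1}\|_{-1}^2 - \|\tilde{\phi}^n\|_{-1}^2$ used in \eqref{convergence 0.0}, plus a discardable nonnegative remainder (and likewise for the two $\|\cdot\|_{-1,\Gamma}$ versions on the boundary). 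The convexity of $\ln(1+x)-\ln(1-x)$ gives, exactly as in \eqref{convergence 1.1}--\eqref{convergence 1.2}, a nonnegative logarithmic inner product, so the singular terms again cause no difficulty; the diffusion term reproduces $\varepsilon^2\|\nabla_h\tilde{\phi}^{n+1}\|_2^2$ plus the normal-derivative couplings, and the latter cancel against the matching $\varepsilon^2(\tilde{D}_y\tilde{\phi}^{n+1}_{\cdot,0},\tilde{\phi}^{B,n+1})_\Gamma$ and $\varepsilon^2(\tilde{D}_y\tilde{\phi}^{n+1}_{\cdot,N},\tilde{\phi}^{T,n+1})_\Gamma$ produced by the boundary estimates, precisely as in the passage from \eqref{convergence 2.2} to \eqref{convergence 2.7}.

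Two genuinely new terms must then be absorbed. First, the concave contribution now uses the explicit extrapolation $\tilde{\hat{\phi}}^{n+1} = 2\tilde{\phi}^n - \tilde{\phi}^{n-1}$; a Cauchy inequality bounds $\theta_0(\tilde{\hat{\phi}}^{n+1},\tilde{\phi}^{n+1})$ by $\tfrac{\varepsilon^2}{2}\|\nabla_h\tilde{\phi}^{n+1}\|_2^2$ plus a constant multiple of $\|\tilde{\phi}^n\|_{-1}^2 + \|\tilde{\phi}^{n-1}\|_{-1}^2$, which simply feeds the previous two time levels into the Gronwall loop. Second, the stabilizer terms, after pairing, contribute $A\dt(\nabla_h(\tilde{\phi}^{n+1}-\tilde{\phi}^n),\nabla_h\tilde{\phi}^{n+1})$ to the bulk dissipation side and the analogous $B\dt$ expressions in $\|D_x\cdot\|_{2,\Gamma}$ on the boundary; each of these I would split via the identity $(\nabla_h(u-v),\nabla_h u) = \tfrac12(\|\nabla_h u\|_2^2 - \|\nabla_h v\|_2^2 + \|\nabla_h(u-v)\|_2^2)$ into a telescoping $H_h^1$-seminorm piece (carrying the small weight $A\dt$ or $B\dt$) plus a nonnegative remainder, so they strengthen rather than obstruct the estimate. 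The stabilizer normal-derivative terms $A\dt\tilde{D}_y(\tilde{\phi}^{n+1}-\tilde{\phi}^n)$ cancel between bulk and surface just as the $\varepsilon^2$ couplings do, mirroring the cancellations in \eqref{2nd-energy-5}, \eqref{2nd-energy-b2} and \eqref{2nd-energy-b4} of Theorem \ref{2nd energy decay}.

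Combining the bulk and two boundary inequalities, summing in time, and invoking $\|\cdot\|_{-1}\le C\|\cdot\|_2$ together with the truncation bound $\|\tau^{n+1}\|_{-1} + \|\tau_B^{n+1}\|_{-1,\Gamma} + \|\tau_T^{n+1}\|_{-1,\Gamma} \le C(\dt^2+h^2)$ reduces the problem to a two-step discrete Gronwall inequality for the composite quantity $\|\tilde{\phi}^{n+1}\|_{-1}^2 + \|2\tilde{\phi}^{n+1}-\tilde{\phi}^n\|_{-1}^2 + \|2\tilde{\phi}^{n+1}-\tilde{\phi}^n\|_{-1,\Gamma}^2$, yielding \eqref{2nd convergence theorem} once $\dt$ and $h$ are sufficiently small. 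The main obstacle I anticipate lies entirely in the BDF2 bookkeeping: one must check that the Dahlquist identity, the extrapolation bound, and the stabilizer splitting assemble into a single quantity that is simultaneously positive definite and exactly telescoping, so that the multi-step Gronwall argument closes with the stabilizer constants $A,B$ entering only the harmless $\dt$-weighted seminorm contributions; everything else follows the first-order template of Theorem \ref{Theorem 1} without essential change, which is why the full write-up can be kept brief.
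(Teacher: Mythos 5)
Your proposal is correct and follows essentially the same route as the paper's proof: the corrected profile and mean-zero property carry over unchanged, the $O(\dt^2+h^2)$ truncation system is paired with $(-\Delta_h)^{-1}\tilde{\phi}^{n+1}$ and $(-D_x^2)^{-1}\tilde{\phi}^{B/T,n+1}$, the BDF2 G-stability identity produces exactly the telescoping quantities $\|\tilde{\phi}^{n+1}\|_{-1}^2+\|2\tilde{\phi}^{n+1}-\tilde{\phi}^n\|_{-1}^2$ appearing in the paper's estimate, and the convexity, extrapolated concave term, stabilizer splitting, boundary cancellation, and discrete Gronwall steps all match. No gaps.
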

\begin{proof}
A careful consistency analysis indicates the following truncation error estimate:
\begin{align}
& \frac{3\tilde{\phi}^{n+1}-4\tilde{\phi}^n+\tilde{\phi}^{n-1}}{2\dt} = \Delta_h \tilde{\mu}^{n+1} + \tau^{n+1} , \label{2nd error eq-1} \\
& \tilde{\mu}^{n+1} = \ln(1+\hat{\Phi}_N^{n+1})-\ln(1-\hat{\Phi}_N^{n+1})-\ln(1+\phi^{n+1})+\ln(1-\phi^{n+1}) - \theta_0 \tilde{\hat{\phi}}^{n+1} , \nonumber \\
& \quad \quad\quad \quad - \varepsilon^2 \Delta_h \tilde{\phi}^{n+1}-A\dt\Delta_h(\tilde{\phi}^{n+1}-\tilde{\phi}^n) \label{2nd error eq-2}\\
& \frac{3\tilde{\phi}^{B,n+1}-4\tilde{\phi}^{B,n}+\tilde{\phi}^{B,n-1}}{2\dt} = D_x^2 \tilde{\mu}^{n+1}_B + \tau^{n+1}_B, \label{2nd error eq-3} \\
&\frac{3\tilde{\phi}^{T,n+1} - 4\tilde{\phi}^{T,n}+\tilde{\phi}^{B,n-1}}{2\dt} = D_x^2 \tilde{\mu}^{n+1}_T + \tau^{n+1}_T, \label{2nd error eq-4}	\\
& \tilde{\mu}^{n+1}_B = \ln(1+\hat{\Phi}_N^{B,n+1})-\ln(1-\hat{\Phi}_N^{B,n+1})-\ln(1+\phi^{B,n+1})+\ln(1-\phi^{B,n+1}) - \theta_0 \tilde{\hat{\phi}}^{B,n+1} \nonumber \\
& \quad \quad  - \varepsilon\kappa D_x^2 \tilde{\phi}^{B,n+1} - \varepsilon^2 \tilde{D}_{y} \tilde{\phi}^{n+1}_{\cdot, 0}- A \dt \tilde{D}_{y}(\tilde{\phi}^{n+1} - \tilde{\phi}^n)_{\cdot,0}-B \dt D_x^2(\tilde{\phi}^{B,n+1}-\tilde{\phi}^{B,n}), \label{2nd error eq-5}\\
& \tilde{\mu}^{n+1}_T = \ln(1+\hat{\Phi}_N^{T,n+1})-\ln(1-\hat{\Phi}_N^{T,n+1})-\ln(1+\phi^{T,n+1})+\ln(1-\phi^{T,n+1}) - \theta_0 \tilde{\hat{\phi}}^{T,n+1} \nonumber \\
& \quad \quad  - \varepsilon\kappa D_x^2 \tilde{\phi}^{T,n+1} + \varepsilon^2 \tilde{D}_{y} \tilde{\phi}^{n+1}_{\cdot, N}+ A \dt \tilde{D}_{y}(\tilde{\phi}^{n+1}-\tilde{\phi}^n)_{\cdot,N}-B \dt D_x^2(\tilde{\phi}^{T,n+1}-\tilde{\phi}^{T,n}) , \label{2nd error eq-6}
\end{align}
with truncation error accuracy order
\begin{equation}
\|\tau^{n+1}\|_2,\ \|\tau^{n+1}_B\|_{2,\Gamma},\ \|\tau^{n+1}_T\|_{2,\Gamma} \leq O(\dt^2+h^2). \label{2nd truncation}
\end{equation}
Taking an inner product with \eqref{2nd error eq-1} by $2(-\Delta_h)^{-1}\tilde{\phi}^{n+1}$ gives
\begin{align}
\frac{1}{2\dt}&\left(\|\tilde{\phi}^{n+1}\|_{-1}^2-\|\tilde{\phi}^n\|_{-1}^2+\|2 \tilde{\phi}^{n+1}-\tilde{\phi}^n\|_{-1}^2-\|2 \tilde{\phi}^n-\tilde{\phi}^{n-1}\|_{-1}^2\right)  \nonumber\\
& - 2\varepsilon^2(\Delta_h \tilde{\phi}^{n+1},\tilde{\phi}^{n+1}) - 2A\dt(\Delta_h (\tilde{\phi}^{n+1}-\tilde{\phi}^{n}),\tilde{\phi}^{n+1})\leq \nonumber \\
& \qquad\qquad\qquad\qquad  2\theta_0(\tilde{\hat{\phi}}^{n+1},\tilde{\phi}^{n+1}) + \|\tau^{n+1}\|_{-1}^2+ \|\tilde{\phi}^{n+1}\|_{-1}^2 , \label{2nd error 2.1}
\end{align}
in which the convexity inequalities \eqref{convergence 1.1}-\eqref{convergence 1.2} have been applied. The concave expansive error term could be bounded in a straightforward manner:  
\begin{align}
2\theta_0(\tilde{\hat{\phi}}^{n+1},\tilde{\phi}^{n+1})& \leq 2\theta_0\|\nabla_h\tilde{\phi}^{n+1}\|_2 \cdot \|\tilde{\hat{\phi}}^{n+1}\|_{-1} \leq \frac{\theta_0^2}{\varepsilon^2}\|\tilde{\hat{\phi}}^{n+1}\|_{-1}^2+\varepsilon^2\|\nabla_h \tilde{\phi}^{n+1}\|_2^2 \nonumber \\
& \leq \frac{\theta_0^2}{\varepsilon^2}(4\|\tilde{\phi}^n\|_{-1}^2+\|\tilde{\phi}^{n-1}\|_{-1}^2) + \varepsilon^2\|\nabla_h \tilde{\phi}^{n+1}\|_2^2. \label{2nd error 2.2}
\end{align}
The diffusion terms, including the artificial diffusion part, are similarly treated: 
\begin{align}
& -2\varepsilon^2(\Delta_h \tilde{\phi}^{n+1},\tilde{\phi}^{n+1}) = 2\varepsilon^2 \|\nabla_h \tilde{\phi}^{n+1}\|_2^2-2\varepsilon^2(\tilde{D}_{y}\tilde{\phi}^{n+1}_{\cdot,N},\tilde{\phi}^{n+1}_{\cdot,N})_\Gamma+2\varepsilon^2(\tilde{D}_{y}\tilde{\phi}^{n+1}_{\cdot,0},\tilde{\phi}^{n+1}_{\cdot,0})_\Gamma, \label{2nd error 2.3} \\
& - 2A\dt(\Delta_h (\tilde{\phi}^{n+1}-\tilde{\phi}^{n}),\tilde{\phi}^{n+1}) \geq A \dt ( \|\nabla_h\tilde{\phi}^{n+1}\|_2^2-\|\nabla_h\tilde{\phi}^n\|_2^2 ) \nonumber \\
& \quad\quad\quad\quad -2A\dt (\tilde{D}_{y}(\tilde{\phi}^{n+1}-\tilde{\phi}^n)_{\cdot,N},\tilde{\phi}^{n+1}_{\cdot,N})_\Gamma+2A\dt (\tilde{D}_{y}(\tilde{\phi}^{n+1}-\tilde{\phi}^n)_{\cdot,0},\tilde{\phi}^{n+1}_{\cdot,0})_\Gamma. \label{2nd error 2.3.1}
\end{align}
The boundary part is covered by the boundary inner product with \eqref{2nd error eq-3} by $2(-D_x^2)^{-1}\tilde{\phi}^{B,n+1}$ (and with \eqref{2nd error eq-4} by $2(-D_x^2)^{-1}\tilde{\phi}^{T,n+1}$): 
\begin{align}
\frac{1}{2\dt}&\left(\|\tilde{\phi}^{B,n+1}\|_{-1,\Gamma}^2-\|\tilde{\phi}^{B,n}\|_{-1,\Gamma}^2+\|2 \tilde{\phi}^{B,n+1}-\tilde{\phi}^{B,n}\|_{-1,\Gamma}^2-\|2 \tilde{\phi}^{B,n}-\tilde{\phi}^{B,n-1}\|_{-1,\Gamma}^2\right)  \nonumber\\
&+ 2\varepsilon\kappa\|D_x \tilde{\phi}^{B,n+1} \|_{2,\Gamma} + B\dt(\|D_x \tilde{\phi}^{B,n+1}\|_{2,\Gamma} - \|D_x \tilde{\phi}^{B,n}\|_{2,\Gamma}) \nonumber\\
& \leq 2(\varepsilon^2 \tilde{D}_{y} \tilde{\phi}^{n+1}_{\cdot, 0} + A\dt \tilde{D}_{y}(\tilde{\phi}^{n+1}-\tilde{\phi}^n)_{\cdot,0}, \tilde{\phi}^{B,n+1})_\Gamma + 2\theta_0(\tilde{\hat{\phi}}^{B,n+1}, \tilde{\phi}^{B,n+1})_\Gamma  \nonumber\\
& \quad\quad + \|\tau^{n+1}_B\|_{-1,\Gamma}^2+ \|\tilde{\phi}^{B,n+1}\|_{-1,\Gamma}^2 , 
\end{align}
and
\begin{align}
	\frac{1}{2\dt}&\left(\|\tilde{\phi}^{T,n+1}\|_{-1,\Gamma}^2-\|\tilde{\phi}^{T,n}\|_{-1,\Gamma}^2+\|2 \tilde{\phi}^{T,n+1}-\tilde{\phi}^{T,n}\|_{-1,\Gamma}^2-\|2 \tilde{\phi}^{T,n}-\tilde{\phi}^{T,n-1}\|_{-1,\Gamma}^2\right)  \nonumber\\
	&+ 2\varepsilon\kappa\|D_x \tilde{\phi}^{T,n+1} \|_{2,\Gamma} + B\dt(\|D_x \tilde{\phi}^{T,n+1}\|_{2,\Gamma} - \|D_x \tilde{\phi}^{T,n}\|_{2,\Gamma}) \nonumber\\
	& \leq -2(\varepsilon^2 \tilde{D}_{y} \tilde{\phi}^{n+1}_{\cdot, N} + A\dt \tilde{D}_{y}(\tilde{\phi}^{n+1}-\tilde{\phi}^n)_{\cdot,N}, \tilde{\phi}^{T,n+1})_\Gamma + 2\theta_0(\tilde{\hat{\phi}}^{T,n+1}, \tilde{\phi}^{T,n+1})_\Gamma  \nonumber\\
	& \quad\quad + \|\tau^{n+1}_T\|_{-1,\Gamma}^2+ \|\tilde{\phi}^{T,n+1}\|_{-1,\Gamma}^2 . 
\end{align}
Notice that the following quantity has been introduced in the derivation: 
\begin{equation}
\|D_x \phi\|_{2,\Gamma}^2:=\|\phi^T\|_{2,\Gamma}^2+\|\phi^B\|_{2,\Gamma}^2, \quad \forall \phi \in {\mathcal C}_{\rm per}^x.
\end{equation}
Similar to the estimate \eqref{2nd error 2.2}, the expansion error has the following bound: 
\begin{align}
& 2\theta_0(\tilde{\hat{\phi}}^{B,n+1},\tilde{\phi}^{B,n+1})_\Gamma \leq \frac{\theta_0^2}{\varepsilon\kappa}(4\|\tilde{\phi}^{B,n}\|_{-1,\Gamma}^2+\|\tilde{\phi}^{B,n-1}\|_{-1,\Gamma}^2) + \varepsilon\kappa\|D_x \tilde{\phi}^{B,n+1}\|_{2,\Gamma}^2, \\
& 2\theta_0(\tilde{\hat{\phi}}^{T,n+1},\tilde{\phi}^{T,n+1})_\Gamma \leq \frac{\theta_0^2}{\varepsilon\kappa}(4\|\tilde{\phi}^{T,n}\|_{-1,\Gamma}^2+\|\tilde{\phi}^{T,n-1}\|_{-1,\Gamma}^2) + \varepsilon\kappa\|D_x \tilde{\phi}^{T,n+1}\|_{2,\Gamma}^2.
\end{align}
As a consequence, we arrive at 
\begin{align}
& \frac{1}{2\dt}\left(\|\tilde{\phi}^{n+1}\|_{-1}^2-\|\tilde{\phi}^n\|_{-1}^2+\|2 \tilde{\phi}^{n+1}-\tilde{\phi}^n\|_{-1}^2-\|2 \tilde{\phi}^n-\tilde{\phi}^{n-1}\|_{-1}^2\right)  \nonumber \\
& +\frac{1}{2\dt}\left(\|\tilde{\phi}^{n+1}\|_{-1,\Gamma}^2-\|\tilde{\phi}^n\|_{-1, \Gamma}^2+\|2 \tilde{\phi}^{n+1}-\tilde{\phi}^n\|_{-1, \Gamma}^2-\|2 \tilde{\phi}^n-\tilde{\phi}^{n-1}\|_{-1, \Gamma}^2\right) \nonumber \\
& + A \dt \left( \|\nabla_h\tilde{\phi}^{n+1}\|_2^2-\|\nabla_h\tilde{\phi}^n\|_2^2 \right) + B \dt\left( \|D_x\tilde{\phi}^{n+1}\|_{2,\Gamma}^2-\|D_x\tilde{\phi}^n\|_{2,\Gamma}^2 \right) \nonumber \\
& + \varepsilon^2 \|\nabla_h \tilde{\phi}^{n+1}\|_2^2 + \varepsilon\kappa\|D_x \tilde{\phi}^{B,n+1}\|_{2,\Gamma}^2 + \varepsilon\kappa\|D_x \tilde{\phi}^{T,n+1}\|_{2,\Gamma}^2
\leq \nonumber \\
& \quad\quad\quad C_{\varepsilon,\kappa}\left(4\|\tilde{\phi}^n\|_{-1}^2+4\|\tilde{\phi}^n\|_{-1,\Gamma}^2+\|\tilde{\phi}^{n-1}\|_{-1}^2+\|\tilde{\phi}^{n-1}\|_{-1,\Gamma}^2\right)+ \|\tilde{\phi}^{n+1}\|_{-1}^2+ \|\tilde{\phi}^{n+1}\|_{-1,\Gamma}^2 \nonumber \\
& \quad\quad\quad +\|\tau^{n+1}\|_{-1}^2+\|\tau^{T,n+1}\|_{-1,\Gamma}^2+\|\tau^{B,n+1}\|_{-1,\Gamma}^2 , \label{2nd error 2.4}
\end{align}
where $C_{\varepsilon,\kappa} = \max \{ \frac{\theta_0^2}{\varepsilon\kappa}, \frac{\theta_0^2}{\varepsilon^2} \}$.
Finally, an application of a discrete Gronwall inequality results in the desired convergence estimate \eqref{2nd convergence theorem}. This completes the proof.
\end{proof}
\begin{rem}
In fact, the convergence in standard $H^{-1}$ space after correction is equivalent to a modified $H^{-1}$ norm without correction:
$$
\| \phi \|_{-1,m} := \| \phi-\overline{\phi} \|_{-1} + |\overline{\phi}|, \quad \phi \in L^2.
$$
This new norm no longer requires the mean-zero property and is equivalent to the standard $H^{-1}$ norm for mean-zero functions. Meanwhile, it also fulfills the embedding theory. Applying certain modifications to the norm to handle complex properties may be a new theoretical approach in future works.
\end{rem}

\section{Conclusions}  \label{sec:conclusion} 
In this paper, we establish an $H^{-1}$-convergence analysis of two numerical schemes to the CH equation with dynamical boundary condition, including both the first order convex splitting method and the second order accurate one. An explicitly defined auxiliary function is designed to reach the discrete mass conservation of the exact solution, so that the discrete $H^{-1}$ norm of the numerical error function is well-defined. This correction method turns out to be an alternate approach, since it replaces the Fourier projection without loss of accuracy. The additional auxiliary function is in the same order as the spatial truncation error and is independent with time step, so that it does not affect the convergence rate. In turn, this methodology could be generalized to problems with different boundary conditions. The smoothness of the auxiliary function lays the foundation of the theoretical analysis, and this approach could be combined with various methods and problems. 

\section*{Acknowledgments}
C. Wang is partially supported by the National Science Foundation, DMS-2012269, DMS-2309548 and Z.R. Zhang is partially supported by the NSFC, PR China No. 11871105 and 12231003. Y.Z. Guo thanks the Hong Kong Polytechnic University for the generous support.
\bibliographystyle{plain}
\bibliography{draft_ref}



\end{document}